\theoremstyle{plain} 
\newtheorem{theorem}{Theorem}[section]
\newtheorem{proposition}[theorem]{Proposition}
\newtheorem{conjecture}[theorem]{Conjecture}
\theoremstyle{definition} 
\newtheorem{example}[theorem]{Example}
\newtheorem{convention}[theorem]{Convention}
\newtheorem{remark}[theorem]{Remark}
\newtheorem{definition}[theorem]{Definition}
\DeclareMathOperator{\Id}{Id}
\DeclareMathOperator{\Spec}{Spec}
\DeclareMathOperator{\triv}{\mathbf{1}}
\DeclareMathOperator{\Pure}{\mathbf{Pure}}
\DeclareMathOperator{\rank}{rank}
\DeclareMathOperator{\cone}{cone}
\DeclareMathOperator{\No}{N}
\DeclareMathOperator{\Image}{Image}
\DeclareMathOperator{\Vect}{\mathbf{Vect}}
\DeclareMathOperator{\Mat}{Mat}
\DeclareMathOperator{\id}{id}
\DeclareMathOperator{\HH}{\mathcal{H}}
\DeclareMathOperator{\Fr}{Fr}
\newcommand{\QQ}{\mathbb{Q}}
\DeclareMathOperator{\tw}{\mathtt{tw}}
\DeclareMathOperator{\Rep}{\mathbf{Rep}}
\DeclareMathOperator{\2d}{\mathtt{2d}}
\DeclareMathOperator{\fr}{fr}
\DeclareMathOperator{\KK}{\mathrm{K_0}}
\DeclareMathOperator{\tr}{\mathop{tr}}
\DeclareMathOperator{\Gl}{GL}
\DeclareMathOperator{\fSym}{fSym}
\DeclareMathOperator{\Iso}{Iso}
\DeclareMathOperator{\Jac}{\mathbf{Jac}}
\DeclareMathOperator{\Betti}{\mathtt{Betti}}
\DeclareMathOperator{\Dolb}{\mathtt{Dol}}
\DeclareMathOperator{\hp}{\mathtt{hp}}
\DeclareMathOperator{\EE}{E}
\newcommand{\Ho}{\mathrm{H}}
\newcommand{\HO}{\mathcal{H}}
\newcommand{\spec}{\mathop{\rm Spec}\nolimits}
\DeclareMathOperator{\Exp}{\mathbf{Exp}}
\newcommand{\Coh}{\mathop{\rm Coh}\nolimits}
\renewcommand{\phi}{\varphi}
\newcommand{\Hom}{\mathop{\rm Hom}\nolimits}
\newcommand{\Ext}{\mathop{\rm Ext}\nolimits}
\newcommand{\End}{\mathop{\rm End}\nolimits}
\newcommand{\Db}[1]{\mathrm{D}^b(#1)}
\newcommand{\Dbf}[1]{\mathrm{D}^b_{fd}(#1)}
\newcommand{\Dbc}[1]{\mathbf{D}_c^b(#1,\QQ)}
\newcommand{\GL}{{\rm GL}}
\newcommand{\PGL}{{\rm PGL}}
\newcommand{\MHM}{\mathop{\mathbf{MHM}}\nolimits}
\newcommand{\Gr}{\mathop{\rm Gr}\nolimits}
\newcommand{\pt}{\mathop{\rm pt}}
\renewcommand{\tilde}{\widetilde}
\newcommand{\Cp}{\mathbb{C}}
\newcommand{\M}{\mathcal M}
\renewcommand{\O}{\mathcal O}
\newcommand{\x}{{\bf x}}
\renewcommand{\v}{{\bf v}}
\def\co{\colon\thinspace}
\begin{document}
\author[B. Davison ]{Ben Davison}
\address{B. Davison: EPFL}
\email{nicholas.davison@epfl.ch}

\title[Cohomological Hall algebras and character varieties]{Cohomological Hall algebras and character varieties}
\maketitle
\begin{abstract}
In this paper we investigate the relationship between twisted and untwisted character varieties via a specific instance of the Cohomological Hall algebra for moduli of objects in 3-Calabi-Yau categories introduced by Kontsevich and Soibelman. In terms of Donaldson--Thomas theory, this relationship is completely understood via the calculations of Hausel and Villegas of the E polynomials of twisted character varieties and untwisted character stacks. We present a conjectural lift of this relationship to the cohomological Hall algebra setting.
\end{abstract}
\section{Introduction}
A fundamental object of research in the study of Higgs bundles on a genus\footnote{Throughout the paper we assume $g\geq 1$.} $g$ complex curve is the \textit{twisted character variety}
\begin{equation}
\label{BettiSpace}
\M^{\Betti,\tw}_{g,n}:=\left\{\begin{tabular}{c}$A_1,\ldots,A_g,B_1,\ldots,B_g\in\GL_{n}(\Cp)$\textrm{ such that}\\ $\prod_{i=1}^{g}A_iB_iA_i^{-1}B_i^{-1}=\exp(2\pi \sqrt{-1}/n)\Id_{n\times n}$\end{tabular}\right\}/\PGL_n(\Cp),
\end{equation}
where the set in brackets is considered as a sub algebraic variety of $\GL_n(\Cp)^{2g}$, and the action of $\PGL_n(\Cp)$ is the simultaneous conjugation action on all of the $A_i$ and the $B_i$.  By the quotient we mean the categorical quotient in the category of complex schemes -- this exists by a theorem of Nagata, see for instance \cite{Ne78}.  By \cite[Cor.2.2.7]{HRV08} the $\PGL_n(\Cp)$-action is free, so that the underlying topological space of (\ref{BettiSpace}) is the orbit space of the $\PGL_n(\Cp)$-action.
\smallbreak
The link between (\ref{BettiSpace}) and Higgs bundles is as follows.  Let $C$ be a nonsingular complete complex genus $g$ curve.  The moduli space of semistable rank $n$ degree $1$ Higgs bundles on $C$ is defined as follows:
\begin{equation}
\label{DolbSpace}
\M^{\Dolb}_{n,1}(C):=\left\{\begin{tabular}{c}semistable $(V,\theta)$ with $V$ a vector bundle, \\$\theta\in \Ho^0\big(\Omega_C\otimes\End(V)\big)$, $\deg(V)=1$ and $\rank(V)=n$\end{tabular}\right\}/\textrm{isomorphism}.
\end{equation}
The $\theta$ in the above definition is known as the Higgs field, an isomorphism of pairs $(V,\theta)\rightarrow (V',\theta')$ is an isomorphism $f:V\rightarrow V'$ compatible with $\theta$ and $\theta'$ in the obvious way, and semistability is the condition that any sub-bundle $V'$ of $V$ preserved by the Higgs field satisfies
\[
\deg(V')/\rank(V')\leq 1/n.
\]
The quotient (\ref{DolbSpace}) arises as a complex algebraic variety via Geometric Invariant Theory -- see for example \cite{Si94}.  The nonabelian Hodge theorem, proved for complex curves via a combination of work of Hitchin \cite{Hi87}, Corlette \cite{Co88} and Donaldson \cite{Do87}, states that there is a diffeomorphism
\[
\Phi\co\M^{\Betti,\tw}_{g,n}\cong\M^{\Dolb}_{n,1}(C)
\]
and so we obtain an isomorphism
\[
\Ho^{\bullet}(\Phi,\QQ)\co\Ho^{\bullet}\big(\M^{\Dolb}_{n,1}(C),\QQ\big)\cong\Ho^{\bullet}\big(\M^{\Betti,\tw}_{g,n},\QQ\big)
\]
between the singular cohomology groups with rational coefficients of these two varieties (the reader may wish to consult the appendix of \cite{We08} for a very approachable overview of this part of the theory).  The mixed Hodge structure on $\Ho^{i}(\M^{\Dolb}_{n,1}(C),\QQ)$ is pure of weight $i$ (see for example the discussion in \cite{deCa11}); on the other hand the mixed Hodge structure on $\Ho^{i}(\M^{\Betti,\tw}_{g,n},\QQ)$ is \textit{not} pure.  This leads to some of the main problems in the subject: to understand the mixed Hodge structure $\Ho^{\bullet}(\M^{\Betti,\tw}_{g,n},\QQ)$, and to describe the image of the weight filtration of $\Ho^{\bullet}(\M^{\Betti,\tw}_{g,n},\QQ)$ on $\Ho^{\bullet}(\M^{\Dolb}_{n,1}(C),\QQ)$ under the isomorphism $\Ho^{\bullet}(\Phi,\QQ)$.  The P=W conjecture of \cite{CaHaMi12} states that the weight filtration on $\Ho^{\bullet}(\M^{\Betti,\tw}_{g,n},\QQ)$ becomes the \textit{perverse filtration} on $\Ho^{\bullet}(\M^{\Dolb}_{n,1}(C),\QQ)$, defined in terms of the Hitchin system (see \cite{CaHaMi12} for details).  We will concentrate on $\Ho^{\bullet}(\M^{\Betti,\tw}_{g,n},\QQ)$, saying a few words about the P=W conjecture at the end of the paper.
\smallbreak
Consider instead the space
\begin{equation}
\label{BettiStack}
\M^{\Betti}_{g,n}:=\left\{\begin{tabular}{c}$A_1,\ldots,A_g,B_1,\ldots,B_g\in\GL_n(\Cp)$ \\such that $\prod_{i=1}^g A_iB_iA_i^{-1}B_i^{-1}=\Id_{n\times n}$\end{tabular}\right\}/\GL_n(\Cp).
\end{equation}
The space in brace brackets can be considered as a variety parametrising representations of $\pi_1(C)$.  If $n\geq 2$, then in contrast with the twisted character variety (\ref{BettiSpace}) the action of $\GL_n(\Cp)$ on the space in brackets is not free, even after replacing it with the action of $\PGL_n(\Cp)$ -- for instance if $\triv$ denotes the trivial representation of $\pi_1(C)$, the stabiliser group of $\triv^{\oplus n}$ is the whole of $\PGL_n(\Cp)$.  We consider (\ref{BettiStack}) as a stack theoretic quotient -- it is isomorphic to the Artin stack of $n$-dimensional representations of $\pi_1(C)$.  An overview of Artin stacks, and in particular global quotient stacks, is provided by Gomez's paper \cite{Go01}.  The definition of the stack (\ref{BettiStack}) as a functor from affine schemes to groupoids starts as follows: $X=\Spec(R)$ is sent to the groupoid of $R\otimes\Cp[\pi_1(C)]$-modules which are locally free of rank $n$ over $R$.
\smallbreak
The space $\M^{\Betti,\tw}_{g,n}$ is smooth, and as such there is an isomorphism of mixed Hodge structures
\begin{equation}
\label{PDiso}
\Ho^{\bullet}\big(\M^{\Betti,\tw}_{g,n},\QQ\big)\cong\Ho^{\bullet}_c\big(\M^{\Betti,\tw}_{g,n},\QQ\big)^*\{\dim_{\Cp}(\M^{\Betti,\tw}_{g,n})\}
\end{equation}
given by Poincar\'e duality.  Here and from now on we use the notation $L\{i\}:=L\otimes_{\mathbb{Q}}\mathbb{Q}(-i)[-2i]$, where $\mathbb{Q}(-i)$ is the 1-dimensional mixed Hodge structure of weight $2i$ and the square brackets denote the cohomological shift of degree.  On the other hand the \textit{untwisted character stack} $\M^{\Betti}_{g,n}$ is not smooth, so it matters whether we study its cohomology or dual compactly supported cohomology.  We pick the latter, and recall the definition in Section \ref{Tbackground}. 
\smallbreak
To relate $\Ho^{\bullet}(\M^{\Betti,\tw}_{g,n},\QQ)$ to $\Ho_c^{\bullet}(\M^{\Betti}_{g,n},\QQ)^*$ we introduce some notation and results from \cite{HRV08}.  Given a cohomologically graded mixed Hodge structure $L^{\bullet}$, define the mixed Hodge polynomial
\[
\hp(L^{\bullet},x,y,t)=\sum_{a,b,j\in\mathbb{Z}}\dim_{\Cp}\big(\Gr_a^F(\Gr_{a+b}^W(L^j\otimes_{\mathbb{Q}}\Cp))\big)x^ay^bt^j.
\]
All mixed Hodge structures $L$ that arise will satisfy $\Gr_{a}^F(\Gr^W_{a+b}(L\otimes_{\mathbb{Q}}\Cp))=0$ if $a\neq b$, so we may as well pass to the two variable specialization $\hp(L^{\bullet},q,t)=\hp(L^{\bullet},\sqrt{q},\sqrt{q},t)$.  Setting $\EE(L^{\bullet},q):=\hp(L^{\bullet},q,-1)$ we obtain a specialization of $\hp(\Ho_c^{\bullet}(X,\QQ))$, for $X$ an algebraic variety, that is a motivic invariant in the sense that for $U\subset X$ an open subvariety, $\EE(\Ho_c^{\bullet}(X,\QQ))=\EE(\Ho_c^{\bullet}(U,\QQ))+\EE(\Ho_c^{\bullet}(X\setminus U,\QQ))$.  The main result of \cite{HRV08} is the explicit calculation of $\EE(\Ho_c^{\bullet}\big(\M^{\Betti,\tw}_{g,n},\QQ\big))$.  Using the same techniques the authors also calculate $\EE(\Ho_c^{\bullet}\big(\M^{\Betti}_{g,n},\QQ\big))$ \cite[Thm.3.8.1]{HRV08}.
\smallbreak
To relate these two calculations we use the language of \textit{plethystic exponentials} (see \cite{Ge95}).  Let $\Vect_{\mathbb{Z}^l}$ be the category of $\mathbb{Z}^l$-graded vector spaces, with finite dimensional graded pieces.  Taking characteristic polynomials gives an isomorphism $\KK(\Vect_{\mathbb{Z}^l})\xrightarrow{\chi}\mathbb{Z}[[x_1^{\pm 1},\ldots,x_l^{\pm 1}]]$.  Let $\Vect_{\mathbb{Z}^I}^{+}\subset\Vect_{\mathbb{Z}^l}$ be the subcategory of vector spaces $V$ which are strictly positively graded with respect to the first $\mathbb{Z}$-grading, and for which the coefficient of each $x_1^a$ in $\chi([V])$ is a formal function in the remaining variables with a finite order pole at the origin.  We may identify $\KK(\Vect_{\mathbb{Z}^l}^+)$ with a subring of $\mathbb{Z}[[x_1^{\pm 1},\ldots,x_l^{\pm 1}]]$ via $\chi$.  Furthermore there is a functor $\fSym\co\Vect_{\mathbb{Z}^l}^+\rightarrow \Vect_{\mathbb{Z}^l}$ taking $V$ to the underlying vector space of the free commutative algebra generated by $V$, and a function 
\[\Exp:=\KK(\fSym)\co \KK(\Vect_{\mathbb{Z}^l}^+)  \rightarrow \mathbb{Z}[[x_1^{\pm 1},\ldots,x_l^{\pm 1}]].\]
\smallbreak
\smallbreak
Returning to character varieties, we relate the calculation of $\EE(\Ho_c^{\bullet}(\M^{\Betti,\tw}_{g,n},\QQ))$ to $\EE(\Ho_c^{\bullet}(\M^{\Betti}_{g,n},\QQ))$.  Consider the graded mixed Hodge structures 
\begin{align}
\label{untwstco}
\HO^{\Betti}_g:=&\bigoplus_{n\geq 0}\Ho_c(\M^{\Betti}_{g,n},\QQ)\{(1-g)n^2\}^*,\\
\label{twstco}
\HO^{\Betti,\tw}_g=&\bigoplus_{n\geq 1}\Ho_c([\M^{\Betti,\tw}_{g,n}/\Gl_{1}(\Cp)],\QQ)\{(1-g)n^2\}^*.
\end{align}
Here we give $\M^{\Betti,\tw}_{g,n}$ the trivial $\Gl_{1}(\Cp)$ action.  There is an isomorphism in cohomology
\[
\Ho_c([\M^{\Betti,\tw}_{g,n}/\Gl_{1}(\Cp)],\QQ)\cong \Ho_c\left(\left[\left\{\begin{tabular}{c}$A_1,\ldots,A_g,B_1,\ldots,B_g\in\GL_{n}(\Cp)$\textrm{ such that}\\ $\prod_{i=1}^{g}A_iB_iA_i^{-1}B_i^{-1}=\exp(2\pi \sqrt{-1}/n)\Id_{n\times n}$\end{tabular}\right\}/\GL_n(\Cp)\right],\QQ\right)
\]
so that (\ref{twstco}) is the cohomology of the stack of twisted representations of the fundamental group of $C$.  Applying $\EE(\bullet)$ to each graded piece of these mixed Hodge structures, we obtain formal power series $\EE(\HO^{\Betti,\tw}_g):=\sum_{n\geq 1} \EE(\HO^{\Betti,\tw}_{g,n})x^n\in x\mathbb{Z}[[x,q^{\pm 1}]]$ and $\EE(\HO^{\Betti}_g)\in\mathbb{Z}[[x,q^{\pm 1}]]$.  Combining the results of \cite{HRV08} gives the remarkable relation
\begin{equation}
\label{TheRelation}
\Exp(\EE(\HO_g^{\Betti,\tw}))=\EE(\HO_g^{\Betti}).
\end{equation}
The goal of this paper is to understand relation (\ref{TheRelation}).  We will show that, guided by the theory of BPS algebras, or cohomological Hall algebras of objects in 3-Calabi-Yau categories, we can put a kind of Hopf algebra structure on the mixed Hodge structure $\HO^{\Betti}_g$, and we conjecture that the resulting algebra satisfies a PBW theorem.  The relation (\ref{TheRelation}) then becomes the statement that the $\EE$ series of the subspace of primitive elements in the PBW basis for $\HO^{\Betti}_g$ is exactly the $\EE$ series of $\HO^{\Betti,\tw}_g$, leading naturally to our main conjecture.
\begin{conjecture}
\label{mainconj1}
There is an isomorphism of mixed Hodge structures
\[
\fSym(\HO_g^{\Betti,\tw})\cong\HO_g^{\Betti}.
\]
\end{conjecture}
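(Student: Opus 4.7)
The plan is to lift the generating-series identity (\ref{TheRelation}) from $\EE$-polynomials to mixed Hodge structures by equipping $\HO_g^{\Betti}$ with the structure of a cohomological Hall algebra (CoHA) of 3-Calabi-Yau type and establishing a PBW decomposition whose subspace of primitives is canonically identified with $\HO_g^{\Betti,\tw}$.

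First I would present $\M^{\Betti}_{g,n}$ as the moduli of objects in a 3-Calabi-Yau category attached to $\pi_1(C)$, either via the multiplicative preprojective algebra associated to the genus-$g$ surface or as the derived character stack of $C$ with its natural $2$-shifted symplectic structure, in order to bring the Kontsevich--Soibelman machinery to bear. Summation along the stack of short exact sequences then equips $\HO_g^{\Betti}$ with an associative product in graded mixed Hodge structures; the cohomological shift by $\{(1-g)n^2\}$ in (\ref{untwstco}) is forced by the dimension of $\M^{\Betti}_{g,n}$ and is precisely what renders the product homogeneous of weight zero. A cocommutative coproduct can be built dually, giving a bialgebra-like structure to which PBW-type arguments apply.

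Next, I would prove a cohomological integrality theorem of the form $\HO_g^{\Betti}\cong \fSym(\mathbf{P})$ as mixed Hodge structures, with $\mathbf{P}$ the BPS subspace of primitive elements. The model is the Davison--Meinhardt theorem for quivers with potential, where one applies the decomposition theorem to the good-moduli-space map and exploits purity of the vanishing cycle sheaf. Here the analogous route is a dimensional-reduction presentation of the Betti stack, after which the same strategy — decomposition theorem applied to the map from $\M^{\Betti}_{g,n}$ to its coarse moduli, together with a support lemma forcing $\mathbf{P}_n$ to live over the locus of simple representations — should give the freeness. Finally I would identify $\mathbf{P}_n$ with $\Ho_c([\M^{\Betti,\tw}_{g,n}/\Gl_{1}(\Cp)],\QQ)^*\{(1-g)n^2\}$ by combining the support lemma with the twist trick of \cite{HRV08}, which exchanges scalar-fixed untwisted representations with $\Gl_1(\Cp)$-quotients of twisted ones. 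Applying $\EE(\bullet)$ to the resulting isomorphism recovers (\ref{TheRelation}) as a sanity check.

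The main obstacle is the PBW / cohomological integrality statement in the multiplicative Betti setting. In the quiver-with-potential case one has a genuine global $(-1)$-shifted symplectic presentation and a vanishing-cycle sheaf whose purity drives the decomposition theorem argument; here one needs a workable critical-locus model for $\M^{\Betti}_{g,n}$, a relative hard Lefschetz for the good-moduli-space map, and precise control of the support of the BPS sheaf, all at the level of mixed Hodge modules rather than merely $\EE$-polynomials. Matching the primitives with the \emph{twisted} cohomology, as opposed to some unspecified object with the correct numerical invariants, is where the geometry of the scalar deformation by $\exp(2\pi\sqrt{-1}/n)\Id$ must enter essentially, and is likely to be the subtlest step of the whole programme.
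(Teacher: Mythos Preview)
The statement you are attempting to prove is labelled a \emph{conjecture} in the paper, and the paper does not prove it. What the paper actually does is construct the CoHA structure on $\HO_g^{\Betti}$ and then \emph{state} the PBW decomposition as a further conjecture (attributed to Schiffmann, in the concluding section) which would imply this one. So there is no proof in the paper to compare against; your proposal and the paper are both outlining the same programme, and neither completes it.

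Your outline tracks the paper's strategy closely, with one substantive difference in the first step. You propose presenting $\M^{\Betti}_{g,n}$ via the multiplicative preprojective algebra or the $2$-shifted symplectic derived character stack in order to get a 3-Calabi--Yau model. The paper instead builds its critical-locus presentation concretely: it takes a brane tiling $\Delta$ of $\Sigma_g$ admitting a cut, forms the localized quiver with potential $(\widetilde{\mathbb{C}Q_{\Delta}},W_{\Delta})$, proves the resulting Jacobi algebra is Morita equivalent to $\mathbb{C}[\pi_1(\Sigma_g\times S^1)]$, and then applies dimensional reduction (Proposition~\ref{dimredprop}) to identify the vanishing-cycle CoHA of this quiver with potential with $\HO_g^{\Betti}$. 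This already supplies the ``workable critical-locus model'' you list as an obstacle, together with the localized Hopf algebra structure, and does so without shifted symplectic machinery. What remains genuinely open in the paper is exactly what you flag as the hardest part: the cohomological integrality/PBW theorem for this CoHA, and the identification of the primitives with $\HO_g^{\Betti,\tw}$. The paper offers no argument for either step beyond the numerical evidence of relation~(\ref{TheRelation}).
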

The conjecture implies that the mixed Hodge polynomials of the spaces $\M_{g,n}^{\Betti,\tw}$ are encoded in the mixed Hodge power series of the spaces $\M_{g,n}^{\Betti}$, and vice versa, providing new lines of attack on the conjectures of Hausel and Rodriguez-Villegas regarding these polynomials.
\section{Equivariant Cohomology and Vanishing Cycles}
\label{Tbackground}
Before launching into the construction of cohomological Hall algebras arising from Jacobi algebras and vanishing cycles, we collect together some of the background definitions.

Let $X$ be an arbitrary variety equipped with a $G$-action, and a faithful embedding of algebraic groups $G\subset \Gl_{\Cp}(m)$.  We first define the compactly supported cohomology of the \textit{global quotient stack} [X/G] (see \cite{Go01} for a definition of this stack).  For $M\geq m$ let $\Fr(m,M)$ be the space of $m$-tuples of linearly independent vectors in $\Cp^M$.  There are natural inclusions $\Fr(m,M)\rightarrow\Fr(m,M+1)$ inducing inclusions
\[
X\times_{G}\Fr(m,M)\xrightarrow{i_M} X\times_{G}\Fr(m,M+1)
\]
and Gysin morphisms in the category of mixed Hodge structures
\begin{equation}
\label{capp}
\Ho_c\big(X\times_{G}\Fr(m,M),\QQ\big)\{-mM\}\rightarrow\Ho_c\big(X\times_{G}\Fr(m,M+1),\QQ\big)\{-m(M+1)\}
\end{equation}
obtained by applying $\left(X\times_G \Fr(m,M+1)\rightarrow \pt\right)_!$ to the adjunction map 
\begin{equation}
\label{adjmap}
i_{M,!}\mathbb{Q}_{X\times_G \Fr(m,M)}\{-mM\}\rightarrow \mathbb{Q}_{X\times_G \Fr(m,M+1)}\{-m(M+1)\}.
\end{equation}
We define $\Ho_c([X/G],\QQ)$ to be the limit of these maps.
\smallbreak
Now let $X$ be a smooth complex variety, and let $f\in \Gamma(\O_X)$ be a function on it.  Let $\Dbc{X}$ denote the derived category of sheaves of $\QQ$-vector spaces on $X$ with analytically constructible cohomology (all subsequent functors are assumed to be derived).  Then if we define $X_{>0}:=\{x\in X|f(x)\in \mathbb{R}_{>0}\}$ and $X_0:=\{x\in X|f(x)=0\}$, we define the functor $\psi_f\co\Dbc{X}\rightarrow\Dbc{X}$ by
\[
\psi_f=(X_0\rightarrow X)_*(X_0\rightarrow X)^*(X_{>0}\rightarrow X)_*(X_{>0}\rightarrow X)^*.
\]
For instance, applying $\psi_f$ to $\mathbb{Q}_X$, we obtain the sheaf
\[
(X_0\rightarrow X)_*(X_0\rightarrow X)^*(X_{>0}\rightarrow X)_*\QQ_{X_{>0}}
\]
supported on $X_0$, the \textit{sheaf of nearby cycles} on $X$.  As defined this is actually an object in $\Dbc{X}$, and is rarely represented by an actual sheaf.  Via the adjunction $\id\rightarrow (X_{>0}\rightarrow X)_*(X_{>0}\rightarrow X)^*$ we obtain a natural transformation
\begin{equation}
\label{adjmap}
(X_0\rightarrow X)_*(X_0\rightarrow X)^*\xrightarrow{q} \psi_f
\end{equation}
and we define $\phi_f=\cone(q)$ (with some care this cone can be made functorial -- see for instance Ex. VIII.13 of \cite{KS90}).  By abuse of notation we will often just denote $\phi_f:=\phi_f\mathbb{Q}_{X}[-1]$.  The shift here is for book-keeping purposes later.  In fact the functor $\phi_f$ defined above lifts to an endofunctor of the derived category of mixed Hodge modules $\phi_f\co\Db{\MHM(X)}\rightarrow \Db{\MHM(X)}$.  For this paper we needn't say anything about the category of mixed Hodge modules except that there is a forgetful functor $\Db{\MHM(X)}\rightarrow \Dbc{X}$ which is faithful, the six functor formalism of Grothendieck and the functors $\psi_f$ and $\phi_f$ lift to $\Db{\MHM(X)}$, and $\Db{\MHM(\pt)}$ is the derived category of mixed Hodge structures.  The interested reader can consult \cite{Sai89} for more details.  We recover Deligne's mixed Hodge structure on $\Ho_c^{\bullet}(X,\QQ)$ for $X$ an arbitrary variety, by applying $(X\rightarrow \pt)_!$ to the constant mixed Hodge module $\mathbb{Q}(0)$ on $X$, and the mixed Hodge structure of Steenbrink and Navarro Aznar on $\Ho^{\bullet}(X,\phi_f)$ by applying $(X\rightarrow \pt)_*$ to $\phi_f\in\Db{\MHM(X)}$.  We will use four facts regarding vanishing cycles:
\begin{enumerate}

\item
For $p\co X\rightarrow Y$ a proper map and $f\in\Gamma(\mathcal{O}_Y)$, there is a natural isomorphism $\phi_fp_*\rightarrow p_*\phi_{fp}$.
\label{propcomm}
\item
\label{AffFib}
For $q\co X\rightarrow Y$ an $n$-dimensional affine fibration, there is a natural isomorphism $\Ho_c^{\bullet}(Y,\phi_{f})\cong \Ho_c^{\bullet}(X,\phi_{fq})\{-{n}\}$.
\item
\label{degloc}
The support of $\phi_f\QQ_X$ is exactly the degeneracy locus of $f$.  By shrinking $X$ we always assume that this is a subspace of $f^{-1}(0)$.
\item{(Thom--Sebastiani isomorphism)}
\label{HTS}
For $f_i\in\Gamma(\mathcal{O}_{Y_i})$ two functions, there is a natural isomorphism $\Ho_{c}(Y_1,\phi_{f_1})\otimes\Ho_c(Y_2,\phi_{f_2})\cong\Ho_c(Y_1\times Y_2,\phi_{\pi_1^*f_1+\pi_2^*f_2})$.
\end{enumerate}
The fourth fact is a theorem of Massey \cite{Ma01}, at the level of the underlying cohomologically graded vector spaces, and an unpublished theorem of Saito at the level of `monodromic mixed Hodge structures'.  For the mixed Hodge structures we will encounter, there is an independent proof from the theory of dimensional reduction, see the appendix of \cite{Chicago2}.

Finally, let $X$ be a smooth algebraic variety equipped with a $G$-action, where as above we have a faithful embedding $G\subset \Gl_{\mathbb{C}}(m)$, and let $f$ be a $G$-invariant function on $X$.  Then $f$ induces functions $f_M$ on each of the spaces $X\times_G \fr(m,M)$, and applying $\left(X\times_G \fr(m,M+1)\rightarrow \pt\right)_!\phi_{f_{M+1}}$ to the adjunction map (\ref{adjmap}), and applying the natural isomorphism of fact (\ref{propcomm}) above, we obtain Gysin maps 
\begin{equation}
\label{betterap}
\Ho_c\big(X\times_{G}\Fr(m,M),\phi_{f_M}\big)\{-mM\}\rightarrow\Ho_c\big(X\times_{G}\Fr(m,M+1),\phi_{f_{M+1}}\big)\{-m(M+1)\}
\end{equation}
and we define $\Ho_c([X/G],\phi_f)$ to be the limit of these maps.
\smallbreak

\section{The Theory of BPS State Counting for 3--Calabi--Yau Categories}
\label{CoHAconst}
The algebra structure that we define on $\HO^{\Betti}_g$ comes from the Hall algebra construction in noncommutative 3-Calabi-Yau geometry introduced by Kontsevich and Soibelman in \cite{CoHA}.  We provide a short overview of the theory, in the generality that we need.
\begin{definition}\cite{KontRos00}
A (not necessarily commutative) algebra $B$ is \textit{nc smooth} if it is finitely generated and for any algebra $A$, and any two-sided ideal $I\lhd A$ satisfying $I^n=0$ for $n\gg 0$, every algebra homomorphism $f:B\rightarrow A/I$ lifts to a morphism $\tilde{f}:B\rightarrow A$ such that the composition $B\rightarrow A\rightarrow A/I$ is equal to $f$.
\end{definition}
Given a finitely generated algebra $B$ we define $\Rep_n(B)$ to be the stack of $n$-dimensional representations of $B$.  In the language of sheaves of groupoids, if $A$ is a commutative algebra, $\Rep_n(B)(\spec(A))$ is the groupoid obtained by forgetting noninvertible morphisms in the category of $A\otimes B$-modules, locally free over $A$, of rank $n$ at each geometric point of $A$.  
\begin{convention}
\label{liftconv}
It will often be convenient to fix a presentation for $B$:
\[
B\cong\Cp\langle x_1,\ldots,x_t\rangle/\langle r_1,r_2,\ldots\rangle.
\]
Let $\overline{\Rep}_n(B)\subset \Mat_{n\times n}(\Cp)^{\times t}$ be the subscheme cut out by the matrix valued relations $r_1,r_2,\ldots$.  Then $\Rep_n(B)$ is isomorphic to the stack theoretic quotient $[\overline{\Rep}_n(B)/\Gl_{n}(\Cp)]$ formed by equipping $\Mat_{n\times n}(\Cp)^{\times t}$ with the simultaneous conjugation action.  From this description we see that $\Rep_n(B)$ is a finite type global quotient Artin stack.  If $f$ is a function on $\Rep_n(B)$ we denote by $\overline{f}$ the induced function on $\overline{\Rep}_n(B)$.
\end{convention}
\begin{proposition}
Let $B$ be a nc smooth algebra.  Then $\Rep_n(B)$ is a finite type smooth Artin stack.
\end{proposition}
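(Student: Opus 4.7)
The plan is to reduce smoothness of the stack to smoothness of the scheme $\overline{\Rep}_n(B)$ supplied by Convention \ref{liftconv}, and then verify the latter using the infinitesimal lifting criterion built into the definition of an nc smooth algebra.

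First I would invoke Convention \ref{liftconv} to write $\Rep_n(B)\cong[\overline{\Rep}_n(B)/\Gl_n(\Cp)]$, where $\overline{\Rep}_n(B)\subset\Mat_{n\times n}(\Cp)^{\times t}$ is a closed subscheme of a finite type affine variety and therefore itself of finite type. Since $\Gl_n(\Cp)$ is a smooth algebraic group, the quotient $[\overline{\Rep}_n(B)/\Gl_n(\Cp)]$ is automatically an Artin stack of finite type, and the atlas $\overline{\Rep}_n(B)\rightarrow\Rep_n(B)$ is a smooth cover. Consequently smoothness of the stack is equivalent to smoothness of the scheme $\overline{\Rep}_n(B)$, and it is enough to prove the latter.

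Second I would verify smoothness of $\overline{\Rep}_n(B)$ via the infinitesimal lifting criterion: for a finite type $\Cp$-scheme, smoothness is equivalent to the property that for every local Artinian $\Cp$-algebra $A$ and every nilpotent ideal $I\lhd A$, morphisms $\Spec(A/I)\rightarrow\overline{\Rep}_n(B)$ lift to $\Spec(A)\rightarrow\overline{\Rep}_n(B)$. By the functor of points description, an $R$-point of $\overline{\Rep}_n(B)$ is the same data as an algebra homomorphism $B\rightarrow\Mat_{n\times n}(R)$, so a lifting problem becomes a diagram of algebra maps
\[
\begin{array}{ccc}
B & \xrightarrow{\;f\;} & \Mat_{n\times n}(A)/\Mat_{n\times n}(I)\\
 & & \uparrow\\
 & & \Mat_{n\times n}(A).
\end{array}
\]
The key observation is that $\Mat_{n\times n}(I)$ is a two-sided ideal of $\Mat_{n\times n}(A)$ satisfying $\Mat_{n\times n}(I)^k=\Mat_{n\times n}(I^k)=0$ for $k\gg 0$, because $I\lhd A$ is nilpotent. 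The nc smoothness hypothesis on $B$ then produces a lift $\tilde f\co B\rightarrow\Mat_{n\times n}(A)$, which is precisely the required morphism $\Spec(A)\rightarrow\overline{\Rep}_n(B)$.

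Third I would assemble the pieces: $\overline{\Rep}_n(B)$ is a finite type $\Cp$-scheme satisfying the formal smoothness criterion, hence is smooth; consequently $\Rep_n(B)=[\overline{\Rep}_n(B)/\Gl_n(\Cp)]$ is a smooth Artin stack of finite type. There is no real obstacle here — the content of the argument is the translation of the noncommutative lifting property into the commutative smoothness criterion, which amounts to the elementary but essential identification of the two-sided nilpotent ideals of $\Mat_{n\times n}(A)$ with the nilpotent ideals of $A$.
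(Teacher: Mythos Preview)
Your proposal is correct and follows essentially the same route as the paper: reduce to smoothness of the atlas $\overline{\Rep}_n(B)$, then check the infinitesimal lifting criterion by translating an $A/I$-point into an algebra map $B\to\Mat_{n\times n}(A)/\Mat_{n\times n}(I)$ and invoking nc smoothness against the nilpotent two-sided ideal $\Mat_{n\times n}(I)$. The paper's proof is just a terser version of exactly this argument.
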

\begin{proof}
We have already seen that $\Rep_n(B)$ is a finite type Artin stack.  It is enough to show that the atlas $\overline{\Rep}_n(B)$ is smooth (see \cite{Go01}).  Recall the following criterion for smoothness \cite{DiGr64}: a finite type scheme $Y$ is smooth if every map $\Spec(R/I)\rightarrow Y$, for $I$ a nilpotent ideal of $R$ an Artinian local ring, can be lifted to a map $\Spec(R)\rightarrow Y$.  A map $\Spec(R/I)\rightarrow \overline{\Rep}_n(B)$ is given by a map $B\rightarrow \Mat_{n\times n}(R/I)\cong \Mat_{n\times n}(R)/\Mat_{n\times n}(I)$.  Now a lift exists since $\Mat_{n\times n}(I)$ is a 2-sided nilpotent ideal of $\Mat_{n\times n}(R)$.
\end{proof}
\begin{example}
\label{Qex}
Let $Q$ be a quiver with vertices $Q_0$ and arrows $Q_1$ (our quivers are always assumed to satisfy $|Q_0|,|Q_1|<\infty$).  Let $B=\Cp Q$ be the free path algebra of $Q$.  Then $B$ is a nc smooth algebra, and 
\[
\Rep_n(B)\cong\sum_{\substack{n_1,\ldots,n_{|Q_0|}\geq 0\\ \sum n_i=n}}\left[\prod_{a\in Q_1}\Hom(\Cp^{n_{s(a)}},\Cp^{n_{t(a)}})/\prod_{i\in Q_0}\Gl_{n_i}(\Cp)\right],
\]
is a finite type smooth stack (here the $\prod_{i\in Q_0}\Gl_{n_i}(\Cp)$-action is via change of basis on the $\Cp^{n_i}$).
\end{example}
\begin{remark}
As we see from Example \ref{Qex}, the stack of $n$-dimensional representations of a quiver $Q$, and hence the stack of representations of $\Cp Q/I$ for any two-sided ideal $I$, breaks naturally into a disjoint union indexed by $\gamma\in\mathbb{N}^{Q_0}$ with $\sum\gamma_i=n$.  We define $\Rep_{\gamma}(\Cp Q/I)$ to be the substack corresponding to the dimension vector $\gamma$.
\end{remark}
\begin{example}
\label{lqex}
Let $Q$ be a quiver, and let $Q'\subset Q$ be a subquiver.  For each arrow $a\in Q'_1$ add an arrow $a^*$ with $s(a^*)=t(a)$ and $t(a^*)=s(a)$ to form a new quiver $\overline{Q}$ (here and elsewhere $s$ and $t$ stand for source and target).  Recall that in the path algebra $\mathbb{C} Q$, the symbol $e_i$ denotes the path of zero length at the vertex $i$.  Then we define 
\[
\widetilde{\Cp Q}:=\Cp \overline{Q}/\langle a^*a=e_{s(a)},aa^*=e_{t(a)}|a\in Q'_1\rangle, 
\]
the localized path algebra.  Using the previous example one can see that this algebra is smooth, and
\[
\Rep_{\gamma}(\widetilde{\mathbb{C} Q})\cong\left[\left(\prod_{a\in Q_1\setminus Q'_1}\Hom(\Cp^{\gamma_{s(a)}},\Cp^{\gamma_{t(a)}})\times\prod_{a\in Q'_1}\Iso(\Cp^{\gamma_{s(a)}},\Cp^{\gamma_{t(a)}})\right)/\prod_{i\in Q_0}\Gl_{\gamma_i}(\Cp)\right],
\]
is a Zariski open substack of $\Rep_{\gamma}(\Cp Q)$.  We use the notation $\Iso(V',V'')$ to denote isomorphisms from a vector space $V'$ to a vector space $V''$.  The stack $\Rep_{\gamma}(\widetilde{\mathbb{C} Q})$ is a dense open substack of $\Rep_{\gamma}(\Cp Q)$ if and only if for every $a\in Q'$, $\gamma_{s(a)}=\gamma_{t(a)}$.
\end{example}
Given $W$ (called a \textit{potential}) in the vector space quotient $B/[B,B]$ we obtain a function $\tr(W)_{\gamma}$ on $\Rep_{\gamma}(B)$ as follows.  First, lift $W$ to an element $\tilde{W}\in B$.  For a representation $\rho$ of $B$, we obtain an element $\tr(\rho(\tilde{W}))$, independent of which lift $\tilde{W}$ we choose, by cyclic invariance of the trace.  It follows that $\rho\rightarrow \tr(\rho(\tilde{W}))$ defines a function on $\Rep_{\gamma}(B)$.
\smallbreak
Given the pair $(B,W)$ of a smooth noncommutative algebra with potential, one forms as in \cite{ginz} the Jacobi algebra $\Jac(B,W)$.  We will restrict to the case in which $B=\tilde{\Cp Q}$ is the localized path algebra associated to a pair $Q'\subset Q$, and $W\in \Image \left(\Cp{Q}/[\Cp Q,\Cp Q]\rightarrow B/[B,B]\right)$ is a linear combination of cyclic paths in $Q$ -- this simplifies the definition of $\Jac(B,W)$.  Given an arrow $a\in Q_1$, and $u$ a cyclic path in $Q$, we define 
\[
\partial{u}/\partial{a}=\sum_{\substack{u=vaw\\v,w\textrm{ paths in }Q}}wv
\]
and we extend to a function $\partial/\partial{a}:\Cp Q/[\Cp Q,\Cp Q]\rightarrow B$ by linearity.  Then
\[
\Jac(B,W):= B/\langle \partial W/\partial a|a\in Q_1\rangle.
\]
\begin{example}
\label{3loopex}
Let $Q$ be the quiver with one vertex and three loops, labelled $x,y,z$, and let $Q'=Q$.  Then the localized path algebra $\tilde{\Cp Q}$ is $\Cp\langle x^{\pm 1},y^{\pm 1},z^{\pm 1}\rangle$, the Laurent polynomial algebra in three noncommuting variables.  Let $W=x[y,z]$.  Then 
\begin{align*}
\partial W/\partial x=&[y,z]\\
\partial W/\partial y=&[z,x]\\
\partial W/\partial z=&[x,y]
\end{align*}
and $\Jac(\tilde{\Cp Q},W)\cong\Cp[x^{\pm 1},y^{\pm 1},z^{\pm 1}]$, the commutative Laurent polynomial algebra in three variables.  Note that $\Jac(\tilde{\Cp Q},W)\cong\Cp[\pi_1((S^1)^3)]$, the fundamental group algebra of the 3-torus.  We will see with Proposition \ref{Gtrue} that some other fundamental group algebras of 3-manifolds arise as Jacobi algebras.
\end{example}
We consider $\Rep_{\gamma}(\Jac(B,W))$ as a substack of $\Rep_{\gamma}(B)$ in the natural way: the relations $\partial W/\partial a$ define matrix valued functions on $\Rep_{\gamma}(B)$, and $\Rep_{\gamma}(\Jac(B,W))$ is the stack theoretic vanishing locus of these functions.  Alternatively:
\begin{proposition}{\cite[Sec.2.3]{ginz} \cite[Prop.3.8]{Seg08}}:
\label{JacDeg}
$\Rep_{\gamma}(\Jac(B,W))\subset\Rep_{\gamma}(B)$ is the stack-theoretic degeneracy locus of $\tr(W)_{\gamma}$.
\end{proposition}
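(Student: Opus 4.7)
The plan is to work on the atlas $\overline{\Rep}_\gamma(B)$ provided by Convention \ref{liftconv} and Example \ref{lqex}, and verify directly that the ideal generated by matrix entries of $\rho(\partial W/\partial a)$ coincides with the Jacobian ideal of $\tr(\overline{W})_\gamma$. Since both substacks in question descend from the atlas along the same $\prod_{i\in Q_0}\Gl_{\gamma_i}(\Cp)$-action, checking the equality at the level of the atlas is enough.

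First I would unwind the coordinates. After Example \ref{lqex}, the atlas $\overline{\Rep}_\gamma(B)$ is the Zariski open locus of $\prod_{a\in Q_1}\Mat(\gamma_{t(a)}\times\gamma_{s(a)},\Cp)$ where the matrices $X_a$ for $a\in Q'_1$ are invertible; crucially, the inverse arrows $a^*$ contribute no independent coordinates (they are represented by $X_a^{-1}$), and since $W$ is represented by cyclic paths in $Q$ we only need to differentiate in the variables $(X_a)_{ij}$ for $a\in Q_1$.

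The key step is a standard matrix calculus computation: for a single cyclic word $u=a_{l_1}a_{l_2}\cdots a_{l_k}$ in $Q$, the function $\rho\mapsto \tr(X_{a_{l_1}}X_{a_{l_2}}\cdots X_{a_{l_k}})$ satisfies
\[
\frac{\partial}{\partial (X_a)_{ij}}\tr\bigl(\rho(u)\bigr)=\sum_{\substack{r\,:\;a_{l_r}=a}}\bigl(X_{a_{l_{r+1}}}\cdots X_{a_{l_k}}X_{a_{l_1}}\cdots X_{a_{l_{r-1}}}\bigr)_{ji},
\]
which is exactly the $(j,i)$-entry of $\rho(\partial u/\partial a)$; extending by linearity, the partial derivative of $\tr(\overline{W})_\gamma$ with respect to $(X_a)_{ij}$ equals $\rho(\partial W/\partial a)_{ji}$. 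Note that on the open locus where $X_a$ is invertible (for $a\in Q'_1$) the entries $(X_a)_{ij}$ remain local coordinates, so the formula continues to hold unchanged; and since $W$ does not involve any $a^*$ directly, no subtlety arises from the localisation relations.

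Consequently, as $(a,i,j)$ runs over all arrows of $Q$ and all matrix positions, the generators of the Jacobian ideal of $\tr(\overline{W})_\gamma$ coincide (up to the transpose $i\leftrightarrow j$) with the generators of the ideal cutting out $\overline{\Rep}_\gamma(\Jac(B,W))$. Taking the quotient by $\prod_{i\in Q_0}\Gl_{\gamma_i}(\Cp)$ gives the proposition. The only step requiring genuine care is the matrix calculus identity above, together with the observation that inverting the $X_a$ for $a\in Q'_1$ does not affect it; the rest is bookkeeping.
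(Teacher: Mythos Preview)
Your argument is correct and is precisely the standard computation behind this result. The paper itself does not supply a proof; it simply cites Ginzburg and Segal, and the matrix-calculus identity you wrote down,
\[
\frac{\partial}{\partial (X_a)_{ij}}\tr\bigl(\rho(u)\bigr)=\rho(\partial u/\partial a)_{ji},
\]
is exactly the content of those references. Your handling of the localisation is also fine: since $W$ involves only arrows of $Q$ and the atlas for $\widetilde{\mathbb{C}Q}$ is an open subvariety of the atlas for $\mathbb{C}Q$ in the same linear coordinates $(X_a)_{ij}$, the differentiation goes through unchanged and there are no extra coordinate directions coming from the inverse arrows.
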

\smallbreak
From now on we assume that $B$ is the (possibly localized) path algebra of a quiver $Q$.  For $W\in \mathbb{C}Q/[\mathbb{C}Q,\mathbb{C}Q]$, by fact (\ref{degloc}) from Section \ref{Tbackground} and Proposition \ref{JacDeg}, $\phi_{\tr(W)_{\gamma}}$ may be considered as an object of $\Dbc{\Rep_{\gamma}(\Jac(B,W))}$, although we have defined it as an object of $\Dbc{\Rep_{\gamma}(B)}$.  We define the mixed Hodge structure\footnote{In the sequel $-{\dim_{\Cp}(\Rep_{\gamma}(B))/2}$ will always be an integer.  See \cite[Sec.3.4]{CoHA} for the general case.}
\begin{align}
\label{degshift}
\HH_{B,W,\gamma}:=&\Ho_c^{\bullet}\left(\Rep_{\gamma}(\Jac(B,W)),\phi_{\tr(W)_{\gamma}}\right)\{-{\dim_{\Cp}(\Rep_{\gamma}(B))/2}\}^*\\=&\Ho_c^{\bullet}\left(\Rep_{\gamma}(B),\phi_{\tr(W)_{\gamma}}\right)\{-{\dim_{\Cp}(\Rep_{\gamma}(B))/2}\}^*\nonumber
\end{align}
and 
\[
\HH_{B,W}:=\bigoplus_{\gamma\in\mathbb{N}^{Q_0}}\HH_{B,W,\gamma}.
\]
\smallbreak
Next we recall the algebra structure on $\HH_{B,W}$.  For $\gamma'+\gamma''=\gamma$ we define $\Rep_{\gamma',\gamma''}(B)$ to be the stack of pairs $\rho_1\subset\rho_2$, where $\rho_1$ is a $\gamma'$-dimensional representation of $B$ and $\rho_2$ is a $\gamma$-dimensional representation.    We may describe this stack as in Example \ref{lqex}:
\[
\Rep_{\gamma',\gamma''}(B)\cong\left[\left(\prod_{a\in Q_1\setminus Q'_1}\Hom^p(\Cp^{\gamma_{s(a)}},\Cp^{\gamma_{t(a)}})\times\prod_{a\in Q'_1}\Iso^p(\Cp^{\gamma_{s(a)}},\Cp^{\gamma_{t(a)}})\right)/\prod_{i\in Q_0}P_{\gamma'_i,\gamma''_i}(\Cp)\right],
\]
where $P_{\gamma'_i,\gamma''_i}(\Cp)\subset \Gl_{\gamma_i}(\Cp)$ is the subgroup preserving $\Cp^{\gamma'_i}$, and $\Hom^p$ and $\Iso^p$ are the subspaces of $\Hom$ and $\Iso$ preserving the flags $\Cp^{\gamma'_i}\subset \Cp^{\gamma_i}$.  There is a diagram
\[
\xymatrix{
\Rep_{\gamma'}(B)\times \Rep_{\gamma''}(B)&\ar[l]_-q\Rep_{\gamma',\gamma''}(B)\ar[r]^-i&\Rep_\gamma(B)
}
\]
and the idea of the multiplication on $\HH_{B,W}$ is to pull back compactly supported cohomology along the map $i$, then push forward along the map $q$, then dualize.  In a little more detail, we have seen that $\Rep_{\gamma',\gamma''}(B)$ is the stack theoretic quotient of an affine scheme $X$ by $P=\prod_{i\in Q_0}P_{\gamma'_i,\gamma''_i}$, while $\Rep_{\gamma}(B)$ is the stack theoretic quotient of an affine scheme $Y$ by $G=\prod_{i\in Q_0}\Gl_{\gamma_i}(\Cp)$.  The compactly supported cohomology $\Ho_{c}([X/P],\phi_{\tr(W)_{\gamma}})$ is approximated (as in (\ref{betterap})) by $\Ho_c(X\times_P\Fr(n,N),\phi_{\tr(W)_{\gamma,N}})$, where $n=\sum\gamma_i$, and $\Ho_c([Y/G],\phi_{\tr(W)_{\gamma}})$ is defined similarly.  The inclusion $X\rightarrow Y$ is $P$-equivariant, and we have a proper composition of maps
\[
X\times_P\Fr(n,N)\xrightarrow{i_1} Y\times_P\Fr(n,N)\xrightarrow{i_2} Y\times_G\Fr(n,N).
\]
Applying the functor $\phi_{\tr(W)_{\gamma,N}}$ to the adjunction $\mathbb{Q}_{Y\times_G\Fr(n,N)}\rightarrow i_{2,*}i_{1,*}\mathbb{Q}_{X\times_P\Fr(n,N)}$ and using fact (1) regarding vanishing cycles we obtain a map
\[
\Ho_{c}(Y\times_G\Fr(n,N),\phi_{\tr(W)_{\gamma,N}})\rightarrow \Ho_{c}(X\times_P\Fr(n,N),\phi_{\tr(W)_{\gamma,N}})
\]
which gives the desired map $\Ho_c([Y/G],\phi_{\tr(W)_{\gamma}})\rightarrow \Ho_c([X/P],\phi_{\tr(W)_{\gamma}})$ in the limit.  The pushforward along $q$ is defined in much the same way, this time expressing the map $q$ in terms of affine fibrations, and using fact (\ref{AffFib}) regarding vanishing cycles.
\smallbreak
Via the above constructions and the Thom--Sebastiani isomorphism one obtains an associative product $m\co \HH_{B,W}\otimes\HH_{B,W}\rightarrow \HH_{B,W}$ (see \cite{CoHA} for more details).  This is the \textit{cohomological Hall algebra} (CoHA) associated to $(B,W)$.
\begin{remark}
\label{preservation}
One may consider the subalgebra that is the sum of those $\mathcal{H}_{B,W,\gamma}$ for which $\gamma$ satisfies the condition that if $\gamma_i\neq \gamma_j$, there are as many arrows in $Q$ from $i$ to $j$ as from $j$ to $i$.  For this subalgebra the multiplication $m$ preserves the cohomological grading, and is a morphism of mixed Hodge structures.
\end{remark}
\begin{remark}
Again imposing the restriction of Remark \ref{preservation}, the CoHA $\mathcal{H}_{B,W}$ carries a richer structure, making it into a kind of Hopf algebra object in the derived category of mixed Hodge structures.  For a $G$-variety $X$ there is a natural morphism $[X/G]\rightarrow [X/G]\times [\pt/G]$, inducing the structure of a $\Ho^{\bullet}([\pt/G],\QQ)\cong\Ho^{\bullet}_G(\pt,\QQ)$-module on $\Ho_c^{\bullet}([X/G],\phi_f)^*$, for $f$ a $G$-invariant function on $X$.  There is a localized coproduct $\mathcal{H}_{B,W,\gamma}\rightarrow \left(\mathcal{H}_{B,W,\gamma'}\otimes \mathcal{H}_{B,W,\gamma''}\right)\otimes_{A_{\gamma',\gamma''}} \widetilde{A_{\gamma',\gamma''}}$, where $A_{\gamma',\gamma''}:=\Ho^{\bullet}_{G_{\gamma'}}(\pt,\QQ)\otimes\Ho^{\bullet}_{G_{\gamma''}}(\pt,\QQ)$, and $\widetilde{A_{\gamma',\gamma''}}$ is the localization of $A_{\gamma',\gamma''}$ at the equivariant Euler class of a specific virtual bundle on $\Rep_{\gamma'}(B)\times\Rep_{\gamma''}(B)$.  This makes $\mathcal{H}_{B,W}$ into a localized Hopf algebra (see \cite{Chicago2}).
\end{remark}

\section{The link between Character Varieties and BPS State Counting}
The link between the cohomology of character varieties and BPS state counting, or CoHAs, comes in two steps.  Firstly we describe a class of Jacobi algebras $\Jac(\widetilde{\mathbb{C}Q}_{\Delta},W_{\Delta})$ that arise as noncommutative compactifications of $\Cp[\pi_1(\Sigma_g\times S^1)]$.  Then we use \textit{dimensional reduction}, which gives an isomorphism of mixed Hodge structures $\mathcal{H}_{\widetilde{\mathbb{C}Q}_{\Delta},W_{\Delta},(n,\ldots,n)}\cong \HO^{\Betti}_{g,n}$.
\smallbreak
The first step requires the theory of brane tilings of Riemann surfaces.  A brane tiling $\Delta$ of $\Sigma_g$ is an embedding of a bipartite graph $\Gamma$ in $\Sigma_g$ such that each connected component of $\Sigma_g\setminus \Gamma$ (or `tile') is simply connected.  We assume that we are given a colouring of the vertices of $\Gamma$ with black and white such that no two vertices of the same colour are joined by an edge.  We pick a smooth embedding of the dual graph of $\Delta$ in $\Sigma_g$, this is the underlying graph of the quiver $Q_{\Delta}$, which we orient so that for each black vertex $v$ of $\Delta$ the edges of $Q_{\Delta}$ that are dual to edges containing $v$ form a clockwise cycle.  For every vertex $v\in V(\Delta)$ there is a minimal cyclic path $c_v$ of $Q_{\Delta}$ containing all the duals of the edges containing $v$, and we define
\[
W_{\Delta}=\sum_{v\textrm{ white}}c_v-\sum_{v\textrm{ black}}c_v.
\]
In this way we obtain an algebra $\Jac(\mathbb{C}Q_{\Delta},W_{\Delta})$.  Let $e$ be an edge of $\Delta$, joining vertices $u$ and $v$, dual to the arrow $a\in (Q_{\Delta})_1$.  The expression $\partial W/\partial a$ is the difference $c_v'-c_u'$, where $c_v'$ and $c_u'$ are obtained by cyclically permuting $c_v$ so that $a$ is at the front, and then deleting it.  In other words, the relation $\partial W/\partial a=0$ imposes the condition that two homotopic paths in the quiver $Q_{\Delta}$ become equal in the Jacobi algebra $\Jac(\mathbb{C}Q_{\Delta}, W_{\Delta})$.  See Fig. \ref{Fterm} for an illustration.  This does not however give a bijection between isoclasses of paths from $i$ to $j$ in $Q_{\Delta}$ under the equivalence relation given by equality in $\Jac(\mathbb{C}Q_{\Delta},W_{\Delta})$ and homotopy classes of paths between the vertices $i$ and $j$ in $\Sigma_g$; for instance the path $c_u$ is contractible, but is not equal to the path $e_{s(c_u)}$ in $\Jac(\mathbb{C}Q_{\Delta},W_{\Delta})$.
\begin{figure}
\centering
\input{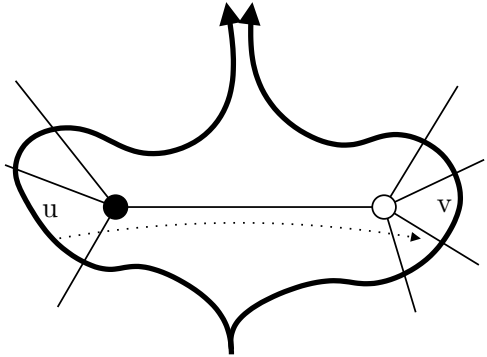}
\caption{Equivalent paths in $\Jac(\mathbb{C}Q_{\Delta},W_{\Delta})$ are homotopic.}
\label{Fterm}
\end{figure}
\smallbreak
We obtain a better picture by thinking of paths in $Q=Q_{\Delta}$ as moving in three dimensions.  For this, assume that the edges of $Q$ are graded by the numbers $\mathbb{Q}$ in such a way that $W_{\Delta}$ is homogeneous of weight $1$ -- we then say that $\Delta$ is a graded brane tiling.  Since $Q$ is already embedded in $\Sigma_g$, we may define an embedding $\iota\co Q_0\rightarrow \Sigma_g\times S^1$ by defining a function $\pi_{S^1}\iota\co Q_0\rightarrow \mathbb{R}/\mathbb{Z}$.  First pick $v_0\in Q_0$ and define $\pi_{S^1}\iota(v_0)=[0]$.  For arbitrary $v\in Q_0$ define $\pi_{S^1}\iota(v)=[|p|]$ in terms of our $\mathbb{Q}$-grading, where $p$ is any path in $Q$ from $v_0$ to $v$.  This $\pi_{S^1}\iota$ is well-defined by \cite[Lem.2.8]{Dav08}.  We extend the embedding $\iota\co Q_0\subset \Sigma_g\times S^1$ to a smooth embedding $Q\subset \Sigma_g\times S^1$ which becomes the existing embedding $Q\subset \Sigma_g$ after projection onto $\Sigma_g$.  We assume that the embedding is minimal in the following sense: considering each arrow $a$ as a path $\alpha\co[0,1]\rightarrow \mathbb{R}/\mathbb{Z}$ via the projection onto $S^1$, the derivative of $\alpha$ is nonnegative, and $\int_0^1 \alpha'=|a|$.
\smallbreak
We say that $\Delta$ is \textit{consistent} if for all paths $u_1,u_2$ from $i$ to $j$ in $Q$, if $c_v^nu_1$=$c_v^nu_2$, considered as elements of $\Jac(\mathbb{C} Q_{\Delta},W_{\Delta})$, then $u_1=u_2$ in $\Jac(\mathbb{C} Q_{\Delta},W_{\Delta})$.  Here $c_v$ is a minimal cycle around a vertex in $\Delta$ with $s(c_v)=j$.  If we consider $\Jac(\mathbb{C} Q_{\Delta},W_{\Delta})$ in the natural way as an algebra with many objects (the objects being $Q_0$), consistency is equivalent to the condition that $\Jac(\mathbb{C} Q_{\Delta},W_{\Delta})$ is an integral domain with many objects.  This condition is easy to satisfy, in particular, for every genus $g$ there is a consistent, graded brane tiling for $\Sigma_g$ -- see \cite{Dav08} for more discussion.  The relevance of consistency here is the following.
\begin{proposition}[\cite{Dav08}]
If $\Delta$ is consistent then two paths $u_1,u_2$ in $Q$ from $i$ to $j$ are equal in $\Jac(\mathbb{C}Q_{\Delta},W_{\Delta})$ if and only if they are homotopic in $\Sigma_g\times S^1$.
\end{proposition}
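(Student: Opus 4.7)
The plan is to prove the two implications separately. The forward implication (equal in $\Jac(\mathbb{C}Q_\Delta,W_\Delta)$ implies homotopic in $\Sigma_g\times S^1$) reduces to checking the generating relations, while the reverse implication is the substantive one where the consistency hypothesis is essential.

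For the forward direction, I would observe that the equivalence on paths defining $\Jac(\mathbb{C}Q_\Delta,W_\Delta)$ is generated by the F-term relations $\partial W_\Delta/\partial a = c_v'-c_u'=0$, where $a$ is dual to an edge of $\Delta$ between a white vertex $v$ and a black vertex $u$, and $c_v',c_u'$ are obtained from the minimal cycles $c_v,c_u$ by cyclically placing $a$ at the front and deleting it. Projected to $\Sigma_g$, the loop $c_v'\cdot (c_u')^{-1}$ bounds the disk formed by the union of the two faces of $Q_\Delta$ adjacent to $a$ (these faces correspond to the vertices $u$ and $v$ of $\Delta$, and are simply connected by assumption), so $c_v'\simeq c_u'$ rel endpoints in $\Sigma_g$. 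The hypothesis that $\Delta$ is graded and $W_\Delta$ is homogeneous of weight one forces $|c_v'|=|c_u'|=1-|a|$, so the two paths have equal $S^1$-winding under the embedding $Q\hookrightarrow \Sigma_g\times S^1$, and the planar homotopy lifts to a homotopy in $\Sigma_g\times S^1$. Since each generating relation preserves homotopy classes, the forward direction follows.

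For the reverse direction, I would pass to the universal cover $\pi\co\widetilde{\Sigma}_g\times\mathbb{R}\to\Sigma_g\times S^1$, noting that $\widetilde{\Sigma}_g$ is contractible for $g\geq 1$. The quiver $Q$ lifts to a quiver $\widetilde Q$ embedded in $\widetilde{\Sigma}_g\times\mathbb{R}$, and two paths $u_1,u_2$ in $Q$ from $i$ to $j$ are homotopic in $\Sigma_g\times S^1$ if and only if, after fixing a lift $\tilde i$, the lifted paths $\tilde u_1,\tilde u_2\subset\widetilde Q$ starting at $\tilde i$ have the same endpoint in $\widetilde{\Sigma}_g\times\mathbb{R}$. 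The $\mathbb{R}$-coordinate of the endpoint is $\pi_\mathbb{R}(\tilde i)+|u_k|$, so one has $|u_1|=|u_2|$ and the projections to $\widetilde{\Sigma}_g$ are homotopic rel endpoints. A cellular approximation of such a homotopy decomposes it into a finite sequence of elementary moves, each replacing one arc along the boundary of a single face of $\widetilde{Q}\subset\widetilde{\Sigma}_g$ by the complementary arc; each such move corresponds to a single F-term relation applied to a subpath.

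The main obstacle, and where consistency is used, is to upgrade this purely two-dimensional sequence of moves to an equality in $\Jac(\mathbb{C}Q_\Delta,W_\Delta)$ while respecting the $\mathbb{R}$-coordinate. The difficulty is that although each elementary move corresponds to a relation $c_v'=c_u'$, the alternative way of traversing the boundary of a single face $F_v$ is not literally $c_u'$ but rather the complement of one arrow $a$ inside $c_v$, which differs in $\mathbb{R}$-displacement from the Jacobi-equivalent path by $\pm 1$. The strategy to handle this is to multiply a problematic path on the right by an appropriate power $c_v^n$, use the F-term relations freely to rewrite the resulting word, and then cancel $c_v^n$; this final cancellation is precisely the no-zero-divisor property of $\Jac(\mathbb{C}Q_\Delta,W_\Delta)$ as an algebra with many objects, which by definition is guaranteed by the consistency hypothesis. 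I would thus proceed by induction on the number of elementary moves in a cellular homotopy between the projections of $\tilde u_1$ and $\tilde u_2$, producing at each step an equality $c_v^{n_k}u_1=c_v^{n_k}u_2$ in $\Jac$, and then invoking consistency to conclude $u_1=u_2$.
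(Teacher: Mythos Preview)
The paper does not supply a proof of this proposition; it is quoted from \cite{Dav08} without argument. I therefore assess your proposal on its own terms and against what a correct argument requires.

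Your forward direction is fine: each F-term relation $c_v'=c_u'$ identifies two paths that are homotopic rel endpoints in $\Sigma_g$ (the loop $c_v'(c_u')^{-1}$ bounds the union of the two faces of $Q_\Delta$ adjacent to $a$) and have equal grading, hence the homotopy lifts to $\Sigma_g\times S^1$.

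The reverse direction has a real gap. Your claim that an elementary $2$-cell move across a \emph{single} face of $\widetilde Q\subset\widetilde\Sigma_g$ ``corresponds to a single F-term relation applied to a subpath'' is not correct. The boundary of a face $F_v$ is the oriented cycle $c_v$; an elementary homotopy across $F_v$ replaces a sub-arc of $c_v$ by the complementary arc traversed in the opposite direction, and the result is \emph{not} a path in the oriented quiver $Q$. An F-term relation, by contrast, exchanges two genuine forward paths $c_v'$ and $c_u'$ that go around the two \emph{different} faces sharing the arrow $a$. So the sequence of cellular moves furnished by a $2$-dimensional homotopy does not translate into a chain of F-term rewrites inside $\mathbb{C}Q_\Delta$, and your inductive scheme ``producing at each step an equality $c_v^{n_k}u_1=c_v^{n_k}u_2$'' does not follow from what you have written: you have named the obstruction but not actually overcome it.

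Your overall strategy --- reduce to an equality $c^{n}u_1=c^{n}u_2$ in $\Jac(\mathbb{C}Q_\Delta,W_\Delta)$ and then cancel using consistency --- is the right shape, and matches the role consistency plays in \cite{Dav08}. What is missing is the mechanism that actually produces such an $n$. The route taken there (and in related treatments of brane tilings) uses perfect matchings on $\Delta$: a perfect matching assigns nonnegative integer weights to paths, multiplication by the central cycle $c$ raises this weight by one, and one proves that any two paths in $\widetilde Q$ with the same endpoints in $\widetilde\Sigma_g$ become F-term equivalent after multiplying each by a sufficiently large power of $c$. That combinatorics of matchings, not a cellular-move induction, is the substantive ingredient your sketch lacks.
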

Pick a maximal tree $T\subset Q_{\Delta}$, and let $(\Sigma_g\times S^1)'$ be obtained by contracting $\iota(T)$ to a point.  The natural map $\pi_1((\Sigma_g\times S^1))\rightarrow \pi_1((\Sigma_g\times S^1)')$ is an isomorphism.  The above proposition states that if $\Delta$ is consistent then the resulting map $e_i\Jac(\mathbb{C}Q_{\Delta},W_{\Delta})e_j\rightarrow\Cp[\pi_1((\Sigma_g\times S^1)')]$ is injective for all $i,j\in (Q_{\Delta})_0$, and we obtain an embedding depending on our choice of $T$
\begin{equation}
\label{tdef}
t_{\Delta}\co\Jac(\mathbb{C}Q_{\Delta},W_{\Delta})\subset \Mat_{r\times r}(\Cp[\pi_1(\Sigma_g\times S^1)]),
\end{equation}
where $r=|(Q_{\Delta})_0|$.
\begin{proposition}
\label{Gtrue}
Let $\Delta$ be an arbitrary brane tiling of $\Sigma_g$, and localize with respect to the pair $Q_{\Delta}\subset Q_{\Delta}$.  Then there is an isomorphism
\[
\Jac(\tilde{\Cp Q_{\Delta}},W_{\Delta})\cong\Mat_{r\times r}(\Cp[\pi_1(\Sigma_g\times S^1)]).
\]
\end{proposition}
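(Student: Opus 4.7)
The plan is to produce an explicit isomorphism by extending the natural homotopy-class map to the localization and constructing an inverse. First I would observe that, even without the consistency hypothesis, the assignment sending a path $u$ from $i$ to $j$ in $Q_\Delta$ to $E_{ji}$ tensored with the homotopy class of $p_j^{-1}\cdot u\cdot p_i$ in $\pi_1(\Sigma_g\times S^1)$ (where the $p_i$ are tree paths from a fixed basepoint $v_0$ in a chosen maximal tree $T\subset Q_\Delta$) extends to an algebra homomorphism $\Jac(\Cp Q_\Delta,W_\Delta)\to \Mat_{r\times r}(\Cp[\pi_1(\Sigma_g\times S^1)])$. The key point is that the Jacobi relation $\partial W/\partial a=c_v'-c_u'=0$ vanishes under this map, because both cycles $c_v$ and $c_u$ represent the $S^1$-generator $\tau$ in $\pi_1(\Sigma_g\times S^1)\cong \pi_1(\Sigma_g)\times\langle\tau\rangle$ (by the grading condition that $W_\Delta$ is homogeneous of weight $1$), so after deleting the shared arrow $a$ both $c_v'$ and $c_u'$ become homotopic to $\tau a^{-1}$. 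Since each arrow maps to an invertible element, the universal property of localization yields an extension $\tilde{t}_\Delta\colon \Jac(\widetilde{\Cp Q_\Delta},W_\Delta)\to \Mat_{r\times r}(\Cp[\pi_1(\Sigma_g\times S^1)])$.

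Next I would construct an inverse $\psi$. Using the standard presentation
\[
\pi_1(\Sigma_g\times S^1)=\big\langle \alpha_1,\beta_1,\ldots,\alpha_g,\beta_g,\tau \;\big|\; {\textstyle\prod_i}[\alpha_i,\beta_i],\;[\alpha_i,\tau],\;[\beta_i,\tau]\big\rangle,
\]
pick words $w_{\alpha_i},w_{\beta_i}$ in arrows and inverse arrows of $Q_\Delta$ realising $\alpha_i,\beta_i$ as loops at $v_0$, and represent $\tau$ by the loop at $v_0$ obtained by conjugating $c_{v_\ast}$ (for some face $v_\ast$) by the appropriate tree path. Then set $\psi(E_{ji}\otimes g):=p_j\cdot w(g)\cdot p_i^{-1}$ for any word $w(g)$ expressing $g$ in the chosen generators. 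Well-definedness of $\psi$ reduces to three assertions in $\Jac(\widetilde{\Cp Q_\Delta},W_\Delta)$: (a) all cycles $c_v$ agree, giving a common element $c$; (b) $c$ is central; (c) the surface relation $\prod_i[w_{\alpha_i},w_{\beta_i}]=1$ holds. Claims (a) and (b) are direct consequences of the Jacobi relations combined with invertibility: for adjacent faces $v,u$ joined by the edge dual to $a$, right-multiplication of $\partial W/\partial a=c_v'-c_u'=0$ by $a$ gives $c_v=c_u$ and, in the symmetric form, $a\cdot c_{s(a)}=c_{t(a)}\cdot a$, so $ac=ca$ for every arrow; since the arrows generate the algebra, $c$ is central.

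The hard part will be claim (c). My approach is to sum the Jacobi relations over all tiles of $\Delta$: because $\Delta$ is bipartite and each arrow lies in exactly one black and one white face boundary with opposite signs in $W_\Delta$, the individual relations telescope to a global identity of the form $\prod_i[w_{\alpha_i},w_{\beta_i}]=c^N$ in the localized Jacobi algebra, for some integer $N$ determined by the Euler characteristic of $\Sigma_g$ and the combinatorics of the tiling (this is the combinatorial obstacle — making the telescoping precise and matching it against the $4g$-gon presentation of $\Sigma_g$ after collapsing the tree $T$). Applying $\tilde{t}_\Delta$ and comparing with the identity $\prod_i[\alpha_i,\beta_i]=1$ in $\pi_1(\Sigma_g)$, while using that $\tilde{t}_\Delta(c)=\tau$ is torsion-free in $\pi_1(\Sigma_g\times S^1)$, forces $N=0$, establishing (c). With (a)--(c) in hand, $\psi$ is a well-defined algebra homomorphism, the identity $\tilde{t}_\Delta\circ\psi=\id$ holds by construction on generators, and $\psi\circ\tilde{t}_\Delta=\id$ follows because every localized arrow can be rewritten in the standard form $p_{t(a)}\cdot(p_{t(a)}^{-1}\cdot a\cdot p_{s(a)})\cdot p_{s(a)}^{-1}$ modulo the established relations.
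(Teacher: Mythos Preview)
Your overall strategy---construct $\tilde t_\Delta$ and then an explicit inverse $\psi$---is sound and is a legitimate alternative to the paper's route, which instead shows directly that (the extension of) $t_\Delta$ is surjective and injective.  Claims (a) and (b) are correctly derived from the Jacobi relations together with invertibility of the arrows, and the construction of $\tilde t_\Delta$ is fine.

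The genuine gap is claim (c).  ``Summing the Jacobi relations over all tiles'' does not in any obvious way produce a \emph{multiplicative} identity of the form $\prod_i[w_{\alpha_i},w_{\beta_i}]=c^N$: each relation $c'_v=c'_u$ is a local path identity, and assembling a collection of such identities into a single word equality along the boundary of the $4g$-gon is precisely the nontrivial topological step you have not carried out.  You flag this yourself as ``the combinatorial obstacle'', and nothing in the proposal closes it.  The same problem recurs, hidden, in your verification of $\psi\circ\tilde t_\Delta=\id$: to conclude that $p_{t(a)}\cdot w\big([p_{t(a)}^{-1}\,a\,p_{s(a)}]\big)\cdot p_{s(a)}^{-1}=a$ in the localized Jacobi algebra you must already know that any two words in $\overline{Q_\Delta}$ representing the same class in $\pi_1(\Sigma_g\times S^1)$ are equal there---which is (c) again.

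The paper sidesteps all of this by citing \cite[Lem.~2.7]{Dav08}: in $\Jac(\widetilde{\Cp Q_\Delta},W_\Delta)$ every closed path at $i$ equals $c_i^{\lambda}$ for some integer $\lambda$.  That lemma is exactly the missing ingredient for your argument as well (the surface word is null-homotopic in $\Sigma_g\times S^1$, hence equals $c^{\lambda}$, and the $S^1$-degree forces $\lambda=0$), and once one has it the paper's direct injectivity proof is two lines: if $t_\Delta(p)=t_\Delta(p')$ then $p^{-1}p'=c_i^{\lambda}$, and comparing winding numbers gives $\lambda=0$.  So either approach works once that lemma is available, but the paper's is shorter; your proposal, as it stands, neither invokes the lemma nor supplies a substitute proof of it.
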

\begin{proof}
We extend $t_{\Delta}$ to a map $\Jac(\tilde{\Cp Q_{\Delta}},W_{\Delta})\rightarrow\Mat_{r\times r}(\Cp[\pi_1(\Sigma_g\times S^1)])$.  It is not hard to check that this map is surjective; we show injectivity.  If $t_{\Delta}(p)=t_{\Delta}(p')$ for two paths $p$ and $p'$ in $\overline{Q_{\Delta}}$ starting at $i\in (Q_{\Delta})_0$, then $t_{\Delta}(p^{-1}p')=e_{ii}$, the elementary matrix with entry $1$ in the $(i,i)$th place and zeros elsewhere.  But $p^{-1}p'=c_i^\lambda$ in $\Jac(\tilde{\Cp Q_{\Delta}},W_{\Delta})$ for some $\lambda$, by \cite[Lem.2.7]{Dav08}, from which we deduce that $\lambda=0$ and $p=p'$ in $\Jac(\widetilde{\mathbb{C} Q_{\Delta}},W_{\Delta})$.
\end{proof}
Note that no reference to consistency is made in Proposition \ref{Gtrue}.  Thinking of $\Jac(\tilde{\Cp Q_{\Delta}},W_{\Delta})$ as an algebra with many objects, this is explained by the fact that the localized Jacobi algebra $\Jac(\tilde{\Cp Q_{\Delta}},W_{\Delta})$ is always an integral domain with many objects, regardless of whether $\Delta$ is consistent or not.

\section{Dimensional Reduction for Brane Tiling Algebras}
Recall that we always assume that $g\geq 1$.  Let $\Delta$ be a consistent brane tiling on $\Sigma_g$, and let $\Dbf{\Jac(\Cp Q_{\Delta},W_{\Delta})}$ be the subcategory of the derived category of complexes of modules over the associated Jacobi algebra consisting of complexes with finite dimensional total cohomology.  Then $\Dbf{\Jac(\Cp Q_{\Delta},W_{\Delta})}$ shares some features with $\Db{\Coh(X)}$, the bounded derived category of coherent sheaves on a smooth projective Calabi--Yau 3-fold $X$, namely by \cite[Cor.4.4]{Dav08} there is a natural equivalence of bifunctors
\[
\Ext^i(M,N)\simeq \Ext^{3-i}(N,M)^*
\]
for $N,M\in \Dbf{\Jac(\Cp Q_{\Delta},W_{\Delta})}$.  On the other hand, by Poincar\'e duality, $\Dbf{\Cp[\pi_1(\Sigma_g)]}$ carries a similar equivalence of bifunctors, but with a shift of 2 -- the category $\Dbf{\Cp[\pi_1(\Sigma_g)]}$ has more in common with the category of coherent sheaves on a Calabi--Yau 2-fold than a 3-fold.  The purpose of this section is to explain how it is, then, that the cohomology of the untwisted character stack shows up in the study of CoHAs for certain brane tiling algebras.
\smallbreak
We use cohomological dimensional reduction of vanishing cycles, for which the setup is very general (see \cite{BBS} for the motivic analogue).  Let $Y$ be a $G$-variety, and let $E$ be the total space of a $G$-equivariant vector bundle on $Y$.  We assume that every point of $Y$ is contained in a $G$-equivariant affine subvariety of $Y$.
\begin{proposition}
\label{dimredprop}
Let $f$ be a regular function on $[E/G]$ that is homogeneous of weight one with respect to the scaling action of $\Cp^*$ along the fibres of $\pi\co E\rightarrow Y$.  Let $i\co[Z/G]\hookrightarrow f^{-1}(0)$ be the maximal subspace of $f^{-1}(0)$ satisfying $\pi^{-1}\pi([Z/G])=[Z/G]$.  Then there is a natural equivalence
\begin{equation}
\label{NatEq}
\Upsilon\co\pi_!\phi_f\pi^*[-1]\cong \pi_!i_*i^*\pi^*\co\Db{\MHM([Y/G])}\rightarrow\Db{\MHM([Y/G])}.
\end{equation}
\end{proposition}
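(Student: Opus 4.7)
The plan is to reduce Proposition \ref{dimredprop} to an explicit computation of vanishing cycles for a fiberwise-linear function, by trivializing $E$ locally and exploiting the weight-one homogeneity of $f$.

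First I would pass to a local model. Using the hypothesis on $G$-invariant affine neighborhoods of $Y$, cover $[Y/G]$ by affine open substacks $[U/G]$ over which $E$ trivializes equivariantly as $U\times V$ with $G$ acting linearly on $V$. The weight-one homogeneity of $f$ then identifies $f$ with a $G$-equivariant linear form $f(y,v)=\langle s(y),v\rangle$ for a section $s\in\Gamma([U/G],E^*)$, and in this description $Z=\pi^{-1}(s^{-1}(0))$. The natural transformation $\Upsilon$ would be built from the connecting morphism $\phi_f[-1]\to (f^{-1}(0)\hookrightarrow E)_*(f^{-1}(0)\hookrightarrow E)^*$ in the defining triangle for $\phi_f$, composed with the natural map to $i_*i^*$ coming from the inclusion $Z\hookrightarrow f^{-1}(0)$; pre-composing with $\pi^*$ and applying $\pi_!$ yields $\Upsilon$.

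To verify that $\Upsilon$ is an isomorphism, I would argue locally in the trivialization above. Since $f$ has no critical points over $U\setminus s^{-1}(0)$, fact (\ref{degloc}) places the support of $\phi_f$ inside $Z$. I would then use a Thom--Sebastiani reduction (fact (\ref{HTS})): locally straightening $f$ to $f=v_1 g(y)$ for a single coordinate $v_1$ and a regular function $g$ reduces the computation of $\phi_f\pi^*\mathcal{F}[-1]$ to the rank-one case, where an explicit computation with the defining cone of $\phi_f$ pins it down completely. Combined with the affine fibration formula (fact (\ref{AffFib})) applied to the vector bundle projection $\pi|_Z\colon Z\to s^{-1}(0)$, both $\pi_!\phi_f\pi^*\mathcal{F}[-1]$ and $\pi_!i_*i^*\pi^*\mathcal{F}$ then reduce to $(s^{-1}(0)\hookrightarrow Y)_*(s^{-1}(0)\hookrightarrow Y)^*\mathcal{F}$ up to the same Tate twist determined by $\rk(E)$.

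The main obstacle is that vanishing cycles do not commute with arbitrary pullbacks, so the local/fiberwise computation is delicate and must be set up carefully. The saving grace is the weight-one hypothesis itself, which makes $f$ a $\Gm$-equivariant map $E\to\mathbb{A}^1$ and hence $\phi_f$ a monodromic object, pinning down its local structure and guaranteeing that the Thom--Sebastiani reduction behaves as expected. Upgrading the resulting identification from $\Dbc{[Y/G]}$ to $\Db{\MHM([Y/G])}$ with functorial dependence on $\mathcal{F}$ then follows from Saito's theory applied in this monodromic setting, and gluing the local natural transformations over an affine equivariant cover of $[Y/G]$ produces the global $\Upsilon$.
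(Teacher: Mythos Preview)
The paper does not prove this proposition in the text: it cites the appendix of \cite{Chicago2} for the case where $Y$ is a single affine variety and then notes that the general statement is local on $Y$. Your reduction to $G$-invariant affine opens over which $E$ trivializes, and your construction of $\Upsilon$ from the defining triangle for $\phi_f$ together with the inclusion $Z\hookrightarrow f^{-1}(0)$, are therefore in line with the paper's structure.

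The gap is in your local computation. The step ``locally straightening $f$ to $f=v_1\,g(y)$'' is not possible in general. Writing $f(y,v)=\langle s(y),v\rangle$ for the section $s\in\Gamma(E^*)$ as you do, the form $v_1\,g(y)$ would force $s$ to factor, locally on $Y$, through a line sub-bundle of $E^*$. This fails already for $Y=\mathbb{A}^2$, $\rk E=2$, $s=(y_1,y_2)$: here $s^{-1}(0)=\{0\}$ has codimension two, which no hypersurface $\{g=0\}$ can cut out, and no nowhere-vanishing fibrewise change of coordinates repairs this. Even in the genuine rank-one case, $v_1\,g(y)$ is not a box sum $f_1(v_1)+f_2(y)$, so fact~(\ref{HTS}) does not apply to it; the rank-one computation is a direct Milnor-fibre calculation, not an instance of Thom--Sebastiani.

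The argument the paper defers to uses the weight-one $\Cp^*$-action more directly, without Thom--Sebastiani. One works with the open--closed decomposition $E=(E\setminus Z)\sqcup Z$: since $i\co Z\hookrightarrow E$ is proper with $f\circ i=0$, fact~(\ref{propcomm}) identifies the $Z$-contribution of $\pi_!\phi_f\pi^*[-1]$ with $\pi_! i_*i^*\pi^*$, and the substance of the proof is the vanishing $\pi_!\phi_f j_!j^*\pi^*=0$ for $j\co E\setminus Z\hookrightarrow E$, established via the contracting $\Cp^*$-action along the fibres (the motivic version in \cite{BBS} follows the same pattern). Your correct observation that $f$ has no critical points on $E\setminus Z$ is an ingredient here, but it is not sufficient on its own, since $\phi_f$ does not commute with the open extension $j_!$.
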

The proof in the case in which $Y$ is a single affine variety is contained in the appendix of \cite{Chicago2}.  The general case is then a consequence of the fact that the statement is local on $Y$.  Applying $([Y/G]\rightarrow \pt)_!$ to $\Upsilon(\mathbb{Q}_{[Y/G]})$, we obtain
\begin{align*}
&\Ho_c^{\bullet}([E/G],\phi_f)\cong\Ho_c^{\bullet}([Z/G],\QQ)&\big(\cong\Ho_c^{\bullet}([\pi(Z)/G],\QQ)\{\dim_{\Cp}(\pi)\}\big).
\end{align*}
Now let $(Q,W)$ be a quiver with potential.  A \textit{cut} of $(Q,W)$ is a set $E\subset Q_1$ of edges such that if we grade the edges $Q_1$ by setting $|a|=1$ if $a\in E$ and $|a|=0$ otherwise, $W$ is homogeneous of degree one.  Let $E\subset Q_1$ be a cut of $(Q,W)$, and let $Q'\subset Q$ be a subquiver containing none of the arrows of $E$.  We define $Q^-=Q\setminus E$ by removing the edges of $E$, and consider the pair $Q'\subset Q^-$, forming the localized path algebra $\tilde{\Cp Q^-}$.  We define the \textit{2-dimensional Jacobi algebra}
\[
\Jac_{\2d}(\tilde{\Cp Q},W,E):=\tilde{\Cp Q}/\langle a,\partial W/\partial a|a\in E\rangle\cong\tilde{\Cp Q^-}/\langle \partial W/\partial a|a\in E\rangle.
\]
Note that although $a\in E$ is not an element of $\tilde{\Cp Q^-}$, $\partial W/\partial a$ is, by the grading conditions.
\begin{example}
Let $(Q,W)$ be the three loop quiver with potential from Example \ref{3loopex}, let $E=\{z\}$, and let $Q'\subset Q$ be the quiver containing the two loops $x,y$.  Then $Q^-=Q'$, and $\tilde{\Cp Q^-}=\Cp\langle x^{\pm 1},y^{\pm 1}\rangle$.  The set $\{\partial W/\partial a|a\in E\}$ is just $\{xy-yx\}$, so that
\[
\Jac_{\2d}(\tilde{\Cp Q},W,E)\cong\Cp[x^{\pm 1},y^{\pm 1}]\cong\Cp[\pi_1((S^1)^2)].
\]
\end{example}
\begin{proposition}
\label{DRJ}
Let $Q'\subset Q,W,E$ satisfy the above conditions. There is an isomorphism of cohomologically graded mixed Hodge structures
\[
\Ho_c^{\bullet}\big(\Rep_{\gamma}(\Jac(\tilde{\Cp Q},W),\phi_{\tr(W)_{\gamma}})\big)\cong\Ho_c^{\bullet}\big(\Rep_{\gamma}(\Jac_{\2d}(\tilde{\Cp Q},W,E)),\QQ\big)\{\prod_{a\in E}\gamma_{s(a)}\gamma_{t(a)}\}.
\]
\end{proposition}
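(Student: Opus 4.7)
The strategy is to realise the statement as a direct application of the dimensional reduction Proposition~\ref{dimredprop}, with the vector bundle supplied by the arrows in the cut $E$.

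I would begin by installing the geometric setup. Because no arrow of $E$ lies in $Q'$, the arrows of $E$ are free letters in $\tilde{\Cp Q}$, and $\tilde{\Cp Q}$ is obtained from $\tilde{\Cp Q^-}$ by freely adjoining them. At the level of representation stacks this gives a $G_{\gamma}$-equivariant vector bundle
\[
\pi\co\Rep_{\gamma}(\tilde{\Cp Q})\longrightarrow Y:=\Rep_{\gamma}(\tilde{\Cp Q^-}),
\]
with fibre $\prod_{a\in E}\Hom(\Cp^{\gamma_{s(a)}},\Cp^{\gamma_{t(a)}})$ and $G_{\gamma}=\prod_{i\in Q_0}\Gl_{\gamma_i}(\Cp)$ acting by simultaneous change of basis. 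The base $Y$ is a smooth Artin stack, being an open substack of $\Rep_{\gamma}(\Cp Q^-)$, and the affine-slice hypothesis of Proposition~\ref{dimredprop} is satisfied because the atlas $\overline{\Rep}_{\gamma}(\tilde{\Cp Q^-})$ is Zariski open inside an affine space.

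Next I would verify the weight hypothesis. The cut condition says that each cyclic monomial summand of $W$ contains exactly one letter belonging to $E$, so $\tr(W)_{\gamma}$ is linear -- in particular homogeneous of weight one -- in the matrix entries of the $E$-coordinates, with respect to the fibrewise scaling action of $\Cp^{*}$ on $\pi$. The coefficients of this linear expansion are precisely the matrix entries of the Jacobi relations $\partial W/\partial a$ for $a\in E$, regarded as regular functions on $Y$. Hence the maximal fibre-saturated subspace $[Z/G_{\gamma}]\subset(\tr(W)_{\gamma})^{-1}(0)$ in the sense of Proposition~\ref{dimredprop} is the full $\pi$-preimage of the substack of $Y$ cut out by the simultaneous vanishing of the $\partial W/\partial a$, $a\in E$. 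By construction this substack is exactly $\Rep_{\gamma}(\Jac_{\2d}(\tilde{\Cp Q},W,E))$.

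Finally I would apply Proposition~\ref{dimredprop} to the mixed Hodge module $\mathbb{Q}_Y$ and push forward to a point. Unwinding the convention $\phi_f=\phi_f\mathbb{Q}[-1]$ absorbs the shift by $[-1]$ on the vanishing-cycle side, while on the other side the proper pushforward along the affine fibration $[Z/G_{\gamma}]\to\Rep_{\gamma}(\Jac_{\2d}(\tilde{\Cp Q},W,E))$ produces a Tate twist of weight equal to the fibre dimension $\sum_{a\in E}\gamma_{s(a)}\gamma_{t(a)}$, which matches the shift recorded in the statement. The main technical point, and the step that deserves the most care, is the verification that Proposition~\ref{dimredprop} actually applies in our stacky, localised-path-algebra setting -- namely, checking the equivariant affine-slice condition on the atlas after inverting the arrows of $Q'$, and identifying the fibre-saturated locus $Z$ with the full pullback of $\Rep_{\gamma}(\Jac_{\2d}(\tilde{\Cp Q},W,E))$. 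The latter identification rests decisively on linearity of $\tr(W)_{\gamma}$ in the $E$-variables, which is precisely the content of the cut hypothesis; everything else is routine bookkeeping with the Tate shifts.
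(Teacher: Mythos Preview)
Your proposal is correct and follows essentially the same route as the paper: both set up the forgetful map $\pi\co\Rep_{\gamma}(\tilde{\Cp Q})\rightarrow\Rep_{\gamma}(\tilde{\Cp Q^-})$ as a $G_{\gamma}$-equivariant vector bundle (using that no arrow of $E$ is inverted), observe that the cut condition makes $\tr(W)_{\gamma}$ linear in the fibre coordinates, identify the fibre-saturated locus $Z$ with $\pi^{-1}(\Rep_{\gamma}(\Jac_{\2d}(\tilde{\Cp Q},W,E)))$ via the coefficients $\partial W/\partial a$ for $a\in E$, and then invoke Proposition~\ref{dimredprop}. Your write-up is a bit more explicit about checking the hypotheses (the affine-slice condition on the atlas and the bookkeeping of shifts), but the argument is the same; note that the shift is $\sum_{a\in E}\gamma_{s(a)}\gamma_{t(a)}$, so the $\prod$ in the displayed statement is a typo.
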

\begin{proof}
Note that $\pi\co\Rep_{\gamma}(\tilde{\Cp Q})\rightarrow \Rep_{\gamma}(\tilde{\Cp Q^{-}})$ is a $\sum_{a\in E}\gamma_{s(a)}\gamma_{t(a)}$-dimensional $\Gl_{\gamma}(\Cp)$-equivariant vector bundle, since we do not localize with respect to any of the arrows in $E$.  So we can apply Proposition \ref{dimredprop}, since $\tr(W)_{\gamma}$ is a weight one function on $\Rep_{\gamma}(\tilde{\Cp Q})$ with respect to the scaling action of $\pi$.  We have to work out $Z$, in the notation of Proposition \ref{dimredprop}.  In the notation of Convention \ref{liftconv}, it is enough to work out $\overline{Z}\subset \overline{\Rep}_{\gamma}(\widetilde{\mathbb{C} Q})$.  We write $\overline{\tr(W)_{\gamma}}=\sum x_{i,j,a}\pi^*(f_{i,j,a})$ where $f_i$ are functions on $\overline{\Rep}_{\gamma}(\tilde{\Cp Q^{-}})$ and the $x_{i,j,a}$ are linear coordinates on the fibres of $\pi$ given on a representation $\rho\in\overline{\Rep}_{\gamma}(\tilde{\Cp Q})$ by the $(i,j)$th entry of $\rho(a)$, for $a\in E$.  Then $Z$ is the locus where all the $f_{i,j,a}$ vanish.  But $f_{i,j,a}$ is just the $(j,i)$th entry of $\partial W/\partial a(\rho)$, so $Z$ is the locus where all the matrix valued functions $\partial W/\partial a$ vanish, i.e. $Z=\pi^{-1}(\Rep_{\gamma}(\Jac_{\2d}(\tilde{\Cp Q},W,E)))$.
\end{proof}

\begin{proposition}
\label{JacMat}
Let $\Delta$ be a brane tiling of $\Sigma_g$, let $E$ be a cut of $Q_{\Delta}$, and let $Q'\subset Q_{\Delta}$ be the subquiver containing all those edges not contained in $E$.
Let $r=|(Q_{\Delta})_0|$.  There is an isomorphism 
\[
\Jac_{\2d}(\tilde{\Cp Q_{\Delta}},W_{\Delta},E)\cong\Mat_{r\times r}(\Cp[\pi_1(\Sigma_g)]).
\]
\end{proposition}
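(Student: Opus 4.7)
The plan is to mimic the proof of Proposition~\ref{Gtrue}, applied not to the CW structure on $\Sigma_g\times S^1$ but to a new CW structure on $\Sigma_g$ whose $1$-skeleton is $Q_\Delta^-$.  First I would extract a combinatorial consequence of the cut condition.  Since the $E$-grading puts $|a|=1$ for $a\in E$ and $|a|=0$ otherwise, and $W_\Delta=\sum_{v\text{ white}}c_v-\sum_{v\text{ black}}c_v$ is homogeneous of degree one, each cycle $c_v$ contains exactly one arrow of $E$.  A fixed $a\in E$ is dual to an edge of $\Delta$ joining a black vertex $u$ and a white vertex $v$, so $a$ appears in both $c_u$ and $c_v$; after cyclically permuting we may write $c_v=a\,p_v$ and $c_u=a\,p_u$ with $p_v,p_u$ paths in $Q_\Delta^-$, giving $\partial W_\Delta/\partial a=p_v-p_u$.

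Second I would upgrade this to a topological statement.  The brane tiling $\Delta$ presents $\Sigma_g$ as a CW complex with $1$-skeleton $Q_\Delta$ and $2$-cells attached along the cycles $c_v$.  For each $a\in E$, the two $2$-cells bounded by $c_u$ and $c_v$ sit on opposite sides of $a$, so deleting $a$ merges them into a single disk with attaching map $p_v\,p_u^{-1}$.  Because each $c_v$ contains exactly one $E$-arrow these merges pair up the $2$-cells perfectly, producing a new CW decomposition of $\Sigma_g$ with $1$-skeleton $Q_\Delta^-$ whose face relations are exactly $\{\partial W_\Delta/\partial a=0\mid a\in E\}$.

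Third I would rerun the argument of Proposition~\ref{Gtrue} using this new CW structure.  Fix a spanning tree $T\subset Q_\Delta^-$ and define, exactly as in~(\ref{tdef}), a homomorphism $t_{\Delta,\2d}\co \tilde{\Cp Q_\Delta^-}\to \Mat_{r\times r}(\Cp[\pi_1(\Sigma_g)])$.  By the previous step $t_{\Delta,\2d}$ annihilates $\partial W_\Delta/\partial a$ for every $a\in E$, so it descends to a homomorphism out of $\Jac_{\2d}(\tilde{\Cp Q_\Delta},W_\Delta,E)$.  Surjectivity is immediate since any loop in $\Sigma_g$ can be homotoped into $Q_\Delta^-$ and then expressed via the inverted edges of $Q_\Delta^-$.

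The main obstacle is injectivity: given paths $p,p'$ in $\overline{Q_\Delta^-}$ from $i$ to $j$ with $t_{\Delta,\2d}(p)=t_{\Delta,\2d}(p')$, the loop $p\,(p')^{-1}$ is null-homotopic in $\Sigma_g$ and is therefore a product of conjugates of the new face boundaries $p_v\,p_u^{-1}$; one then has to check that this null-homotopy lifts to an equality in the $2$d Jacobi algebra using only the relations $\partial W_\Delta/\partial a$.  This is the two-dimensional counterpart of the application of \cite[Lem.~2.7]{Dav08} in Proposition~\ref{Gtrue}, and as there requires no consistency assumption on $\Delta$.
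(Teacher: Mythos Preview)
Your argument is essentially correct, but it takes a genuinely different route from the paper's.  The paper does not build a new CW structure on $\Sigma_g$ at all; instead it exploits the three-dimensional result already in hand.  The cut $E$ endows $\Jac(\tilde{\Cp Q_\Delta},W_\Delta)$ with a nonnegative grading, and the paper checks that the natural map $\Jac_{\2d}(\tilde{\Cp Q_\Delta},W_\Delta,E)\to\Jac(\tilde{\Cp Q_\Delta},W_\Delta)$ is precisely the inclusion of the degree-zero part (the point being that the degree-zero piece of the Jacobi ideal in $\tilde{\Cp Q_\Delta}$ is generated by $\{\partial W_\Delta/\partial a\mid a\in E\}$).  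Composing with the extended $t_\Delta$ of Proposition~\ref{Gtrue} then gives an injection into $\Mat_{r\times r}(\Cp[\pi_1(\Sigma_g\times S^1)])$, and since arrows of $Q'$ have weight zero the image automatically lands in $\Mat_{r\times r}(\Cp[\pi_1(\Sigma_g\times\{\pt\})])$.  Only surjectivity needs a separate check, and the paper does this by the same trick you use implicitly: each $E$-arrow in a path is replaced by the inverse of the remaining arc of its minimal cycle.

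The trade-off is clear.  The paper's approach gets injectivity for free from Proposition~\ref{Gtrue} and \cite[Lem.~2.7]{Dav08}, so no two-dimensional van Kampen argument is needed; your approach is self-contained and makes the geometry on $\Sigma_g$ transparent, at the cost of having to actually carry out the groupoid van Kampen step you sketch in your last paragraph (which is straightforward once all arrows of $Q_\Delta^-$ are inverted, since $p_v-p_u=0$ becomes $p_vp_u^{-1}=1$ and these are exactly the attaching maps of your merged $2$-cells).
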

\begin{proof}
There is a map $j\co\Jac_{\2d}(\tilde{\Cp Q_{\Delta}},W_{\Delta},E)\rightarrow \Jac(\tilde{\Cp Q_{\Delta}},W_{\Delta})$ induced by the natural map 
\[
\tilde{\Cp Q_{\Delta}^{-}}/\langle \partial W_{\Delta}/\partial a|a\in E\rangle\rightarrow \Jac(\tilde{\Cp Q_{\Delta}},W_{\Delta}),
\]
this is a map of nonnegatively graded algebras after placing the domain in degree zero and giving the target the grading satisfying $|a|=1$ for $a\in E$, $|a|=0$ otherwise.  The degree zero part of the Jacobi ideal $\langle \partial W_{\Delta}/\partial a|a\in (Q_{\Delta})_1\rangle$ is exactly $\langle \partial W_{\Delta}/\partial a|a\in E\rangle$, considered as a two-sided ideal of $\tilde{\Cp Q_{\Delta}^-}$, and so $j$ is just the inclusion of the degree zero piece of $\Jac(\tilde{\Cp Q_{\Delta}},W_{\Delta})$.
\smallbreak
We extend $t_{\Delta}$ of (\ref{tdef}) to $\Jac(\tilde{\Cp Q_{\Delta}},W_{\Delta})$ -- the extension is injective by the same argument as Prop. \ref{Gtrue}.  Then we restrict this map to $\Jac_{\2d}(\tilde{\Cp Q_{\Delta}},W_{\Delta},E)$.  The image lies in $\Mat_{r\times r}(\Cp[\pi_1(\Sigma_g\times \pt)])$ since the arrows of $Q'$ have weight zero.  We show that this map is surjective.  For this it is enough to show that for any path $p$ between two vertices $i$ and $j$ of $\overline{Q_{\Delta}}$ there is a homotopic path in $\overline{Q'}$.  Denote $a$ by $a^+$ and $a^*$ by $a^-$.  Every arrow $a_0^{\pm}$ contained in $p$ that is not contained in $\overline{Q'}$ is contained in a minimal cycle $c_v=a_0a_1\ldots a_r$.  We can replace $a_0^{\pm}$ by $a_r^{\mp}\ldots a_1^{\mp}$ to obtain a homotopic path, since $c_v$ is contractible.  Note that all of $a_1,\ldots,a_r\in \overline{Q'}_1$ by the condition on $Q'$.
\end{proof}
\begin{proposition}
\label{Morita}
For a finitely generated algebra $A$ there is an equivalence of Artin stacks
\[
\Rep_m(A)\cong\Rep_{rm}(\Mat_{r\times r}(A)).
\]
\end{proposition}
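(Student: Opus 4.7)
The plan is to realize this equivalence as the classical Morita equivalence between $A$ and $\Mat_{r\times r}(A)$, carried out in families over arbitrary test schemes $\Spec(R)$. Both stacks have been defined (at the start of Section \ref{CoHAconst}) as the functors sending $\Spec(R)$ to a groupoid of modules over $R\otimes A$ (respectively $R\otimes\Mat_{r\times r}(A)=\Mat_{r\times r}(R\otimes A)$) that are locally free of the prescribed rank over $R$, so it suffices to exhibit mutually quasi-inverse equivalences of these groupoids that are natural in $R$.

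Concretely, I would define the functor $F\co\Rep_m(A)(\Spec R)\to\Rep_{rm}(\Mat_{r\times r}(A))(\Spec R)$ by $M\mapsto M^{\oplus r}$, with $\Mat_{r\times r}(R\otimes A)$ acting by the evident matrix multiplication on column vectors. In the other direction, let $e_{ij}\in\Mat_{r\times r}(R\otimes A)$ denote the standard matrix units, and define $G\co N\mapsto e_{11}N$, viewed as an $R\otimes A$-module via the diagonal embedding $R\otimes A\hookrightarrow e_{11}\Mat_{r\times r}(R\otimes A)e_{11}$. Both assignments are manifestly functorial in $R$. The composition $GF(M)=e_{11}M^{\oplus r}$ is canonically identified with the first summand $M$, and for $FG$ one uses the orthogonal idempotent decomposition $1=\sum_{i=1}^r e_{ii}$ together with the isomorphisms $e_{ii}N\cong e_{11}N$ given by left multiplication by $e_{1i}$ and $e_{i1}$ to build a canonical isomorphism $(e_{11}N)^{\oplus r}\cong N$.

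The main point requiring verification is that these functors preserve the respective rank conditions. For $F$ this is immediate, since $M^{\oplus r}$ is locally free of rank $rm$ over $R$ as soon as $M$ is locally free of rank $m$. For $G$, the decomposition $N=\bigoplus_{i=1}^r e_{ii}N$ is an $R$-module direct sum, and the isomorphisms $e_{ii}N\cong e_{11}N$ above are $R$-linear, so the $r$ summands all have the same rank; since $N$ has rank $rm$ this forces each $e_{ii}N$ to be locally free of rank $m$ (local freeness of each summand on a Zariski open where $N$ trivialises follows from the fact that the idempotents $e_{ii}$ are defined over the base). Once the rank conditions are checked, the equivalence of $R$-point groupoids is formal, and naturality in $R$ gives the claimed equivalence of stacks. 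The only real obstacle is this rank computation for $e_{11}N$; everything else is bookkeeping.
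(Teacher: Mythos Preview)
Your proof is correct and is essentially the same Morita-equivalence argument as the paper's: both build mutually inverse functors $M\mapsto M^{\oplus r}$ and $N\mapsto e_{11}N$ (the paper phrases the latter via the column spaces $C_i$), with the only substantive check being that $e_{11}N$ is again locally free over the base. The one difference is that the paper first uses finite-typeness of the stack to factor any test map through a Noetherian base $B'$, where ``locally free $\Leftrightarrow$ finitely generated projective'' is standard, whereas you work directly over arbitrary $R$; your parenthetical justification is slightly loose (on an open trivialising $N$ the summand $e_{11}N$ is a priori only a direct summand of a finite free module, hence projective but not obviously free, so one must refine the cover), but the conclusion stands since finitely generated projective modules are Zariski-locally free over any commutative ring.
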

\begin{proof}
Say a $B\otimes\Mat_{r\times r}(A)$-module $M$ is represented by a map $\Spec(B)\xrightarrow{f} \Rep_{rm}(\Mat_{r\times r}(A))$.  Since $\Rep_{rm}(\Mat_{r\times r}(A))$ is finite type, $f$ factors as $\Spec(B)\xrightarrow{p} \Spec(B')\xrightarrow{g}\Rep_{rm}(\Mat_{r\times r}(A))$ with $p$ corresponding to an inclusion of a Noetherian ring $B'\subset B$, and $g$ corresponding to a $B'\otimes \Mat_{r\times r}(A)$-module $M'$.  Let $C_i$ be the space of $r\times r$ matrices with entries in $A$ which are zero away from the $i$th column.  Since $M'\cong\oplus C_i\cdot M'$ is locally free as a $B'$-module it is projective, since $B'$ is Noetherian.  It follows that each of the summands $C_i\cdot M'$ are projective too, so $C_i\cdot M'$ is locally free, and hence so is $C_i\cdot M\cong p^*C_i\cdot M$.  So $C_i\cdot$ is a natural functor from the groupoid of $B\otimes\Mat_{r\times r}(A)$ modules locally free over $B$ to the groupoid of $B\otimes A$-modules locally free over $B$, with a natural inverse sending $M$ to $M\otimes\Cp^r$ with the natural $B\otimes \Mat_{r\times r}(A)$-action.
\end{proof}
\begin{theorem}
Let $V(\Delta)$ be the number of vertices in a brane tiling $\Delta$ of a surface $\Sigma$.  Assume that $(Q_{\Delta},W_{\Delta})$ admits a cut $E$, and define $Q'\subset Q_{\Delta}$ and $\widetilde{\mathbb{C}Q_{\Delta}}$ as in Proposition \ref{JacMat}.  There is an isomorphism of mixed Hodge structures
\begin{equation}
\label{mainsh}
\Ho_c^{\bullet}\big(\Rep_{(n,\ldots,n)}(\Jac(\tilde{\Cp Q_{\Delta}},W_{\Delta}),\phi_{\tr(W_{\Delta})})\big)\cong \Ho_c^{\bullet}\big(\Rep_n(\mathbb{C}[\pi_1(\Sigma)]),\QQ\big)\{V(\Delta)n^2/2\},
\end{equation}
so the mixed Hodge structure $\HO_g^{\Betti}$ of (\ref{untwstco}) carries the structure of a localized Hopf algebra in the category of cohomologically graded mixed Hodge structures.
\end{theorem}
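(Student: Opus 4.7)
The plan is a three-step chain of isomorphisms assembled from Propositions \ref{DRJ}, \ref{JacMat}, and \ref{Morita}, followed by a transport of the CoHA structure.

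First, I apply Proposition \ref{DRJ} to the left-hand side of (\ref{mainsh}) with $\gamma=(n,\ldots,n)$. The projection $\pi\colon\Rep_{\gamma}(\tilde{\Cp Q_{\Delta}})\to\Rep_{\gamma}(\tilde{\Cp Q_{\Delta}^{-}})$ is a vector bundle of rank $\sum_{a\in E}\gamma_{s(a)}\gamma_{t(a)}=|E|n^{2}$, so dimensional reduction yields
\[
\Ho_c^{\bullet}\big(\Rep_{\gamma}(\Jac(\tilde{\Cp Q_{\Delta}},W_{\Delta})),\phi_{\tr(W_{\Delta})_{\gamma}}\big)\cong\Ho_c^{\bullet}\big(\Rep_{\gamma}(\Jac_{\2d}(\tilde{\Cp Q_{\Delta}},W_{\Delta},E)),\QQ\big)\{|E|n^{2}\}.
\]
Next, Proposition \ref{JacMat} identifies $\Jac_{\2d}(\tilde{\Cp Q_{\Delta}},W_{\Delta},E)\cong\Mat_{r\times r}(\Cp[\pi_1(\Sigma_g)])$ with $r=|(Q_{\Delta})_{0}|$, and Proposition \ref{Morita} identifies $\Rep_{rn}(\Mat_{r\times r}(\Cp[\pi_1(\Sigma_g)]))\cong\Rep_{n}(\Cp[\pi_1(\Sigma_g)])$; the constant-dimension condition $\gamma=(n,\ldots,n)$ picks out the full $rn$-dimensional stack on the matrix side, since the matrix idempotents force each $e_i$ to act with rank exactly $n$.

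To match the Tate twist in (\ref{mainsh}) I verify the combinatorial identity $|E|=V(\Delta)/2$ by double counting the incidences $\{(v,a):a\in c_v,\ a\in E\}$. Because $W_{\Delta}=\sum_v\pm c_v$ is weight one under the $E$-grading, each minimal cycle $c_v$ contains exactly one $E$-arrow, giving $V(\Delta)$ incidences; conversely each arrow of $Q_{\Delta}$ is dual to an edge of $\Delta$ and so lies in exactly two minimal cycles (one around its black endpoint, one around its white), giving $2|E|$ incidences. Hence $|E|n^{2}=V(\Delta)n^{2}/2$, completing the proof of (\ref{mainsh}).

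For the Hopf algebra assertion, restrict the CoHA $\mathcal{H}_{\tilde{\Cp Q_{\Delta}},W_{\Delta}}$ of Section \ref{CoHAconst} to constant dimension vectors. This subalgebra trivially satisfies the hypothesis of Remark \ref{preservation}, so its multiplication is a morphism of mixed Hodge structures preserving cohomological degree, and the localized coproduct described in the final remark of Section \ref{CoHAconst} supplies the localized Hopf structure. To confirm that this structure lands on $\HO_g^{\Betti}$ as normalised in (\ref{untwstco}), observe that the definition (\ref{degshift}) builds in an additional shift by $-\dim_{\Cp}(\Rep_{(n,\ldots,n)}(\tilde{\Cp Q_{\Delta}}))/2=-(E(\Delta)-F(\Delta))n^{2}/2$ together with a dualization; combining with (\ref{mainsh}) and invoking the Euler formula $V(\Delta)-E(\Delta)+F(\Delta)=2-2g$ for the CW-decomposition of $\Sigma_g$ furnished by $\Delta$, the net Tate twist becomes $\{(g-1)n^{2}\}$ applied to $\Ho_c^{\bullet}(\Rep_n(\Cp[\pi_1(\Sigma_g)]),\QQ)^{*}$, exactly matching $\HO_{g,n}^{\Betti}$. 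The main obstacle is precisely this bookkeeping of Tate twists and dualizations: three distinct shift conventions (dimensional reduction, the CoHA normalisation from (\ref{degshift}), and the untwisted-stack normalisation from (\ref{untwstco})) must be coordinated, and they conspire via the Euler characteristic identity to produce the $(1-g)n^{2}$ appearing in (\ref{untwstco}).
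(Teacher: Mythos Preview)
Your proof is correct and follows exactly the same route as the paper: assemble Propositions \ref{DRJ}, \ref{JacMat}, \ref{Morita}, check $|E|=V(\Delta)/2$, and then reconcile the Tate twists via the Euler characteristic $V(\Delta)-E(\Delta)+F(\Delta)=2-2g$ (the paper writes this as $V(\Delta)n^2-(|Q_1|-|Q_0|)n^2=(2-2g)n^2$, which is the same identity once one notes $|Q_1|=E(\Delta)$ and $|Q_0|=F(\Delta)$). You supply two details the paper leaves as observations, namely the double-counting argument for $|E|=V(\Delta)/2$ and the remark that the quiver idempotents force $\Rep_{(n,\ldots,n)}$ to match the full $\Rep_{rn}$ under Morita equivalence, but the architecture is identical.
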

\begin{proof}
For the first statement, put together Propositions \ref{DRJ}, \ref{JacMat} and \ref{Morita}, and observe that $|E|=V(\Delta)/2$.  For the second we use the construction of Section (\ref{CoHAconst}), noting the degree shift by $\dim_{\mathbb{C}}(\Rep_{(n,\ldots,n)}(\tilde{\mathbb{C}Q}))/2$ in (\ref{degshift}), Remark \ref{preservation}, and the calculation 
\begin{align*}
V(\Delta)n^2-\dim_{\mathbb{C}}(\Rep_{(n,\ldots,n)}(\tilde{\mathbb{C}Q}))=&V(\Delta)n^2-(|Q_1|n^2-|Q_0|n^2)\\
=&(2-2g)n^2.
\end{align*}
See \cite{Chicago2} for a construction of such a $(Q_{\Delta},W_{\Delta})$ for $g\geq 1$.
\end{proof}

\section{Conclusion and further directions}
It is striking that the shift in the relation (\ref{TheRelation}) is exactly the shift required to turn the dual compactly supported cohomology of untwisted character stacks into a CoHA.  It leads naturally to the following conjecture (suggested by Olivier Schiffmann), which would imply Conjecture \ref{mainconj1}:
\begin{conjecture}
There is a Lie algebra structure on $\bigoplus_{n\geq 1}\Ho_c(\M^{\Betti,\tw}_{g,n},\QQ)\{(1-g)n^2\}^*$, and a filtration $Y$ on $\HO^{\Betti}_g$ such that there is an isomorphism of algebras 
\[
\Gr_{\bullet}^Y(\HO^{\Betti}_g)\cong\mathcal{U}\left(\bigoplus_{n\geq 1}\Ho_c(\M^{\Betti,\tw}_{g,n},\QQ)\{(1-g)n^2\}^*[u]\right).
\]
\end{conjecture}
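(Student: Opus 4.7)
The approach is to exhibit $\HO_g^{\Betti}$ as a CoHA of a 3-Calabi--Yau category via the brane tiling construction of the preceding section, and then to invoke a PBW/cohomological integrality theorem of Davison--Meinhardt type. First I would fix a consistent graded brane tiling $\Delta$ of $\Sigma_g$ admitting a cut $E$ (which exists for every $g\geq 1$), so that the theorem of the previous section identifies $\HO_{g,n}^{\Betti}\cong \HH_{\widetilde{\Cp Q_{\Delta}},W_{\Delta},(n,\ldots,n)}$ compatibly with the CoHA product. Let $p\co\Rep_{(n,\ldots,n)}(\Jac(\widetilde{\Cp Q_{\Delta}},W_{\Delta}))\rightarrow\mathcal{N}_n$ be the good moduli map to the affinization, sending a representation to its semisimplification. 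Define $Y$ on $\HO_g^{\Betti}$ to be (a degree shift of) the perverse filtration associated to $p_!\phi_{\tr(W_{\Delta})_{(n,\ldots,n)}}$, transported through the above identification.

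The next step would be to establish a cohomological integrality isomorphism
\[
p_!\phi_{\tr(W_{\Delta})_{(n,\ldots,n)}}\cong \Sym\bigl(\mathcal{B}\otimes \Ho^{\bullet}(B\Gm,\QQ)\bigr)
\]
in (monodromic) mixed Hodge modules on $\bigsqcup_n\mathcal{N}_n$, where $\mathcal{B}=\bigoplus_{n\geq 1}\mathcal{B}_n$ is the BPS sheaf. Taking compactly supported cohomology and passing to the associated graded of $Y$ would yield
\[
\Gr^Y_{\bullet}\HO^{\Betti}_g\cong \mathcal{U}(\mathfrak{g}[u]),\qquad \mathfrak{g}:=\bigoplus_{n\geq 1}\Ho_c(\mathcal{N}_n,\mathcal{B}_n)^*,
\]
with $u$ of cohomological degree $2$ (reflecting the $\Ho^{\bullet}(B\Gm)$-action) and with the Lie bracket on $\mathfrak{g}$ induced by the graded commutator of the CoHA product, upgraded to a Lie structure via the localized coproduct mentioned in the final remark of Section \ref{CoHAconst}. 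The main technical input here is adapting the Davison--Meinhardt integrality argument to the localized Jacobi algebra $\Jac(\widetilde{\Cp Q_{\Delta}},W_{\Delta})$: the localization only opens up an extra torus factor on which everything remains equivariant, so one expects the proof to go through mutatis mutandis, but the details would need to be checked.

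The principal obstacle is then the identification
\[
\mathfrak{g}_n=\Ho_c(\mathcal{N}_n,\mathcal{B}_n)^*\cong \Ho_c(\M^{\Betti,\tw}_{g,n},\QQ)^*\{(1-g)n^2\}.
\]
At the level of $\EE$-polynomials this is precisely relation (\ref{TheRelation}), so the existence of such an isomorphism is at least numerically certain, but lifting it to mixed Hodge structures is the hard part. The natural strategy is to exploit the $\Gl_1(\Cp)$-gerbe structure of the twisted character variety: multiplication by the primitive $n$-th root of unity of $\Id$ embeds $[\M^{\Betti,\tw}_{g,n}/\Gl_1(\Cp)]$ as an open substack of the simple locus inside $\Rep_n(\Cp[\pi_1(\Sigma_g\times S^1)])$ consisting of representations with nontrivial central character, and one expects $\mathcal{B}_n$ to be (up to shift) the intersection cohomology sheaf of the closure of that locus. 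Since $\M^{\Betti,\tw}_{g,n}$ is smooth its IC sheaf coincides with its constant sheaf, and matching Tate twists via Proposition \ref{DRJ} together with the identity $V(\Delta)n^2-\dim_{\Cp}\Rep_{(n,\ldots,n)}(\widetilde{\Cp Q_{\Delta}})=(2-2g)n^2$ yields the predicted shift $\{(1-g)n^2\}$. Making this last identification precise beyond the level of $\EE$-polynomials is essentially equivalent to Conjecture \ref{mainconj1}, so one expects the full argument to be of comparable difficulty to the main conjecture itself.
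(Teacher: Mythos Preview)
The statement you are addressing is a \emph{conjecture}, and the paper does not prove it. The paper offers no argument beyond the remark that the conjecture is ``partly motivated by analogy with the case of quiver varieties'' (citing Maulik--Okounkov) ``and partly by relation (\ref{TheRelation}), itself a consequence of Conjecture \ref{mainconj1}.'' So there is no proof in the paper to compare your proposal against.

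Your proposal is a coherent strategy rather than a proof, and you are candid about this. The outline --- realize $\HO_g^{\Betti}$ as a CoHA via the brane tiling, take the perverse filtration for $Y$, invoke a cohomological integrality theorem to get a $\mathcal{U}(\mathfrak{g}[u])$ structure on the associated graded, and then identify the BPS Lie algebra $\mathfrak{g}$ with the twisted-character-variety cohomology --- is exactly the kind of programme one would expect, and indeed anticipates the Davison--Meinhardt machinery developed after this paper. But two of your steps are genuine open problems at the time of the paper. First, the integrality/PBW theorem you invoke is not available in the localized setting $\widetilde{\Cp Q_{\Delta}}$; you note this and say ``one expects the proof to go through mutatis mutandis,'' which is a hope, not an argument. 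Second, and more seriously, the identification $\Ho_c(\mathcal{N}_n,\mathcal{B}_n)^*\cong \Ho_c(\M^{\Betti,\tw}_{g,n},\QQ)^*\{(1-g)n^2\}$ is, as you yourself conclude, ``essentially equivalent to Conjecture \ref{mainconj1}.'' Since the statement you are trying to prove \emph{implies} Conjecture \ref{mainconj1}, this is circular: you have reduced the conjecture to something of the same strength. That is a fair assessment of the difficulty, but it means your proposal is a roadmap, not a proof --- which is consistent with the paper's own stance that this is an open conjecture.
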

Here $u$ is a formal variable of weight and cohomological degree 2, and we extend the Lie bracket via $[gu^i,g'u^j]=[g,g']u^{i+j}$.  The conjecture is partly motivated by analogy with the case of quiver varieties (see below, and \cite[Thm.5.5.1]{MaOk12}), and partly by relation (\ref{TheRelation}), itself a consequence of Conjecture \ref{mainconj1}.
\smallbreak
We finish by returning to the P=W conjecture.  We say a cohomologically graded mixed Hodge structure $L^{\bullet}$ is \textit{pure} if the $i$th graded piece $L^i$ is pure of weight $i$.  Note that for $k\in\mathbb{Z}$, $L^{\bullet}\{k\}$ is pure if $L^{\bullet}$ is.  Furthermore, since $\Gr^W_j(\Ho_c^{i}(X,\QQ))=0$ for $X$ smooth and $j>i$, it follows from the long exact sequence in compactly supported cohomology that $\Gr^W_j(\Ho^{i}_c(X,\QQ)\{k\})=0$, for all schemes $X$ and all values of $k$ and $j>i$.  Since the multiplication and comultiplication in $\HO_g^{\Betti}$ are morphisms of cohomologically graded mixed Hodge structures, it follows that there is a sub localized bialgebra of pure cohomology $\Pure(\HO_g^{\Betti})\subset \HO_g^{\Betti}$.  From Conjecture \ref{mainconj1}, the conjectural form for the mixed Hodge polynomials of $\HO_{g,n}^{\Betti,\tw}$ given in \cite[Conj.4.2.1]{HRV08}, and \cite[Thm.5.1]{Moz11}, we obtain the following prediction:
\begin{equation}
\label{toppart}
\EE\big(\Pure(\HO_{g,n}^{\Betti})\big)=\EE\big(\Ho_c([\mu_{n,g}^{-1}(0)/\Gl_n(\mathbb{C})],\QQ)\{(1-g)n^2\}^*\big)
\end{equation}
where $\mu_{n,g}:\Mat_{n\times n}(\mathbb{C})^{\times 2g}\rightarrow \Mat_{n\times n}(\mathbb{C})$ is given by $\sum_{j=1}^g [A_j,B_j]$.
\smallbreak
Eq. (\ref{toppart}) illustrates the main point of this paper: Conjecture \ref{mainconj1} gives a way to translate conjectures regarding the cohomology of twisted character varieties into very different, but equivalent, conjectures regarding the cohomology of their untwisted counterparts.  Eq. (\ref{toppart}) is the untwisted cousin of the purity conjecture of \cite[Rem.4.4.2]{HRV08}.  
\smallbreak
Let $x\in X$ be a point of an algebraic variety.  Recall the deformation to the normal cone of $x$ in $X$ (see \cite[Ch.5]{Fu84}): there is a map $f:Y\rightarrow \mathbb{A}_{\mathbb{C}}^1$ such that the pullback along the inclusion $\mathbb{A}_{\mathbb{C}}^1\setminus \{0\}\rightarrow \mathbb{A}_{\Cp}^1$ is the trivial family with fibre $X$, and the normal cone $N_x$ to $x$ embeds into the fibre $Y_0$ as an open subvariety.  It follows that $\Ho_c(Y,\psi_f)\cong\Ho_c(X,\QQ)$, and we have a composition of morphisms $\Ho_c(N_x,\QQ)\rightarrow \Ho_c(Y_0,\QQ)\rightarrow \Ho_c(X,\QQ)$, the first coming from the open inclusion, the second coming from the map (\ref{adjmap}).  This construction gives a natural categorification of (\ref{toppart}) (in other words a map to underly the conjectural equality of generating series) suggested by Tam\'as Hausel.  The normal cone to $\triv^{\oplus n}\in \Rep_n(\Cp[\pi_1(\Sigma_g)])$ is precisely $[\mu_{n,g}^{-1}(0)/\Gl_n(\mathbb{C})]$, giving the map
\begin{equation}
\label{NCmap}
\Psi:\HO_g^{\Betti}\rightarrow \bigoplus_n \Ho_c\big([\mu_{n,g}^{-1}(0)/\Gl_n(\mathbb{C})],\QQ\big)\{(1-g)n^2\}^*.
\end{equation}
\smallbreak
\begin{conjecture}
The map $\Psi$ is an isomorphism after restricting to $\Pure(\HO^{\Betti}_g)$.
\end{conjecture}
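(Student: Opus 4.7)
The plan is to reduce the claimed isomorphism to three ingredients: purity of the target of $\Psi$, equality of $E$-polynomials between $\Pure(\HO^{\Betti}_g)$ and the target, and injectivity of $\Psi$ on the pure part. First, $\Psi$ is tautologically a morphism of cohomologically graded mixed Hodge structures, since the composition $\Ho_c(N_x,\QQ)\to\Ho_c(Y_0,\QQ)\to\Ho_c(X,\QQ)$ underlying its definition is built from an open restriction and the adjunction (\ref{adjmap}), both of which lift to $\Db{\MHM}$. In particular $\Psi|_{\Pure(\HO^{\Betti}_g)}$ automatically descends to a morphism of pure graded mixed Hodge structures after projecting the target onto its pure subquotient.

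For Step 1 I would establish purity of the target $\bigoplus_n \Ho_c([\mu_{n,g}^{-1}(0)/\Gl_n(\C)],\QQ)\{(1-g)n^2\}^*$: the stack $[\mu_{n,g}^{-1}(0)/\Gl_n(\C)]$ is the moduli stack of representations of the preprojective algebra of the $g$-loop quiver, which via Proposition \ref{dimredprop} applied to an appropriate tripled quiver with cubic-moment potential and cut picking out the added loop is, up to a Tate twist, dimensional reduction of vanishing cycle cohomology of the relevant CoHA. Purity in this situation is essentially the content of \cite[Thm.5.1]{Moz11}, and can be seen independently via the scaling $\Gm$-action on the $B$-variables and a Białynicki-Birula style argument.

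For Step 2 I would combine Step 1 with the conjectural mixed Hodge polynomial formula of \cite[Conj.4.2.1]{HRV08} and Conjecture \ref{mainconj1} to obtain the equality of $E$-polynomials (\ref{toppart}). Given purity of the target, this $E$-polynomial identity upgrades at once to an equality of the underlying pure graded mixed Hodge structures, matching the dimensions of source and target in every weight and cohomological bidegree.

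Step 3 is the main obstacle: to conclude, one must upgrade matching dimensions to an actual isomorphism by showing that $\Psi|_{\Pure}$ is injective. The natural strategy is to endow the target with the CoHA product coming from Proposition \ref{dimredprop}, and to prove that $\Psi$ is a morphism of localized Hopf algebras in the derived category of mixed Hodge structures. Once this compatibility is granted, injectivity reduces, via the conjectural PBW decomposition predicted by Conjecture \ref{mainconj1}, to nonvanishing of $\Psi$ on each primitive generator, for which purity restricts the possibilities severely. The genuinely difficult point is verifying that specialization to the normal cone at the trivial representation commutes with the extension correspondence underlying the Hall product; this compatibility is tightly entangled with the conjectural PBW structure itself, so I would expect a full proof of this conjecture to proceed in tandem with a proof of Conjecture \ref{mainconj1} rather than independently of it.
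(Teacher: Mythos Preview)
The statement you are attempting to prove is presented in the paper as an open \emph{conjecture}, not as a theorem; the paper offers no proof and records only that $\Psi$ is a retraction in the case $g=1$, via the open embedding $\Rep_n(\Cp[\pi_1(\Sigma_1)])\subset[\mu^{-1}_{n,1}(0)/\Gl_n(\Cp)]$ taking $\triv^{\oplus n}$ to the cone point. There is therefore no ``paper's own proof'' to compare against, and your proposal should be read as an outline of a strategy rather than as a candidate proof.

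Viewed as a strategy, your argument has two structural gaps that prevent it from being a proof. First, Step~2 is explicitly conditional on two other unproven conjectures: Conjecture~\ref{mainconj1} and \cite[Conj.~4.2.1]{HRV08}. The paper itself derives the predicted equality~(\ref{toppart}) only under these hypotheses, so invoking~(\ref{toppart}) as an input does not advance matters. Second, as you yourself note, Step~3 is the crux and remains entirely open: you need $\Psi|_{\Pure}$ to be injective, and your proposed route (compatibility of the normal-cone specialization with the Hall correspondence, plus a PBW decomposition) again presupposes Conjecture~\ref{mainconj1}. So the argument is circular in the sense that its key inputs are of the same conjectural status as the statement itself. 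A genuine proof would have to either establish one of these inputs unconditionally, or find an independent mechanism (e.g.\ a direct geometric argument generalizing the $g=1$ retraction) for the injectivity in Step~3.
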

So far it is known at least that $\Psi$ is a retraction when $g=1$ -- this is easy to see after observing that there is an open embedding $\Rep_n(\Cp[\pi_1(\Sigma_1)])\subset[\mu^{-1}_{n,1}(0)/\Gl_n(\Cp)]$ that takes $\triv^{\oplus n}$ to the vertex of the cone $[\mu^{-1}_{n,1}(0)/\Gl_n(\Cp)]$.  The cohomological Hall algebra that is the target of $\Psi$ is itself the object of study from various directions, and is the object of interest when making the link with the work of Maulik and Okounkov \cite{MaOk12} and, separately, Schiffmann and Vasserot \cite{ScVa12} on the construction of Yangians associated to Nakajima quiver varieties.
\smallbreak
We finally come to the P=W conjecture.  On the Higgs bundle side, the map (\ref{NCmap}) has a natural analogue, given by deformation to the normal cone of the point $(\mathcal{O}_C^{\oplus n},0)\in\mathcal{M}^{\Dolb}_{n,0}(C)$, the \textit{stack} of degree zero semistable Higgs bundles.  By converting the problem to degree zero (the equivalent move on the Higgs bundle side to moving attention from twisted to untwisted character varieties) we arrive at a conjectural geometric description of the lowest perversity part of the cohomology of the stack of semistable Higgs bundles.  In conclusion, we arrive at a new way of understanding the pure part of the P=W conjecture, as well as relating both sides of nonabelian Hodge theory to the theory of BPS state counting in noncommutative geometry.

\section*{Acknowledgements}
While writing this paper I was employed at the EPFL, supported by the Advanced Grant ``Arithmetic and physics of Higgs moduli spaces'' No. 320593 of the European Research Council.

\bibliographystyle{amsplain}
\bibliography{SPP}
\end{document}